\newtheorem{theo+}              {Theorem}           [section]
\newtheorem{prop+}  [theo+]     {Proposition}
\newtheorem{coro+}  [theo+]     {Corollary}
\newtheorem{lemm+}  [theo+]     {Lemma}
\newtheorem{exam+}  [theo+]     {Example}
\newtheorem{rema+}  [theo+]     {Remark}
\newtheorem{defi+}  [theo+]     {Definition}
\newenvironment{theorem}{\begin{theo+}}{\end{theo+}}
\newenvironment{proposition}{\begin{prop+}}{\end{prop+}}
\newenvironment{corollary}{\begin{coro+}}{\end{coro+}}
\newenvironment{lemma}{\begin{lemm+}}{\end{lemm+}}
\theoremstyle{plain} \theoremstyle{remark}
\newtheorem{remark}{Remark}
\newtheorem{example}{Example}
\def\E{/\kern-1.0em \equiv }
\author{Ze-Ping Wang }
\author{Ye-Lin Ou$^{*}$ }
\author{Han-Chun Yang$^{**}$ }
\address{Department of Mathematics, \newline\indent Yunnan University,\newline\indent
Kunming 650091, P. R. China
\newline\indent E-mail:zpwzpw2012@126.com \;(Wang)\\\newline\indent E-mail: hyang@ynu.edu.cn\; (H. Yang)\\\newline\indent
\newline\indent Department of
Mathematics,\newline\indent Texas A $\&$ M
University-Commerce,\newline\indent Commerce, TX 75429
USA.\newline\indent E-mail:yelin.ou@tamuc.edu\; (Ou)}
\thanks{* A part of the work in this paper was done while Ye-Lin Ou was visiting Department of Mathematics at Yunnan University in July, 2014. He was grateful to the department and Prof. Han-Chun Yang for the hospitality he received during the visit.\\
** Research supported by the NSF of China (11361073)}
\begin{document}

\title[Biharmonic and $f$-biharmonic maps from a 2-sphere]{Biharmonic and $f$-biharmonic maps from a 2-sphere}

\subjclass{58E20, 53C43} \keywords{Biharmonic maps, $f$-biharmonic maps, Conformal chnage of metrics, 2-sphere, Riemann sphere.}
\date{01/12/2015}
\maketitle

\section*{Abstract}
\begin{quote}
{\footnotesize We study biharmonic maps and $f$-biharmonic maps from a round sphere $(S^2, g_0)$, the latter maps are equivalent to biharmonic maps from Riemann spheres $(S^2, f^{-1}g_0)$. We proved that for rotationally symmetric maps between rotationally symmetric spaces, both biharmonicity and $f$-biharmonicity reduce to a 2nd order linear ordinary differential equation. As applications, we give a method to produce biharmonic maps and $f$-biharmonic maps from given biharmonic maps and we construct many examples of biharmonic and $f$-biharmonic maps from a round sphere $S^2$ and between two round spheres. Our examples include non-conformal proper biharmonic maps $(S^2, f^{-1}g_0)\longrightarrow S^2$ and $(S^2, f^{-1}g_0)\longrightarrow S^n$, or non-conformal $f$-biharmonic maps $(S^2,
g_0)\longrightarrow S^2$ and $(S^2,g_0)\longrightarrow S^n$ from round sphere with two singular points.}
\end{quote}
\section{introduction}
All objects including manifolds, tensor fields, and maps in this paper are assumed to be smooth if they are not stated otherwise.\\

{\em Harmonic maps} are maps between Riemannian manifolds $\phi: (M,g)\longrightarrow (N,h)$ whose tension fields vanish identically, i.e.,
\begin{equation}\label{fhm}
\tau(\phi) \equiv {\rm Tr}_g\nabla\,d \phi=0,
\end{equation}
where $\tau(\phi)={\rm Tr}_g\nabla\,d \phi$ is called the tension field of
the map $\phi$.\\

{\em Biharmonic maps and $f$-biharmonic maps} are maps $\phi: (M, g)\longrightarrow (N,h)$ between Riemannian manifolds which are critical points of, respectively, the bienergy and the $f$-bienergy functionals
\begin{equation}\notag
E_{2}(\phi)=\frac{1}{2}\int_\Omega |\tau(\phi)|^2v_g,\;\;\;E_{2,f}(\phi)=\frac{1}{2}\int_\Omega f\,|\tau(\phi)|^2v_g,
\end{equation}
for all variations over any compact domain $\Omega$ of $ M$. The
Euler-Lagrange equations of the two functionals give, respectively, {\em the biharmonic map
equation} (\cite{Ji1}):
\begin{equation}\label{BTF}
\tau_{2}(\phi):={\rm
Trace}_{g}(\nabla^{\phi}\nabla^{\phi}-\nabla^{\phi}_{\nabla^{M}})\tau(\phi)
- {\rm Trace}_{g} R^{N}({\rm d}\phi, \tau(\phi)){\rm d}\phi =0,
\end{equation}
where $R^{N}$ denotes the curvature operator of $(N, h)$ defined by
$$R^{N}(X,Y)Z=
[\nabla^{N}_{X},\nabla^{N}_{Y}]Z-\nabla^{N}_{[X,Y]}Z,$$ and the {\em $f$-biharmonic map equation} ( See \cite{Lu} and also \cite{Ou6}):
\begin{equation}\label{FBH} 
\tau_{2,f} (\phi)\equiv f\tau_2(\phi)+(\Delta f)\tau(\phi)+2\nabla^{\phi}_{{\rm grad}\, f}\tau(\phi)=0,
\end{equation}
where $\tau(\phi)$ and $\tau_2( \phi)$ are the tension and the bitension fields of
$\phi$ respectively.\\

Clearly, we have the following relationships among these different types of harmonicities of maps:
$$\{{\rm \bf Harmonic\; maps}\}\subset \{{\rm \bf Biharmonic \;maps} \}\subset \{{\rm \bf f-Biharmonic \;maps}\}.$$
A biharmonic map which is not harmonic is called {\em a proper biharmonic map}, similarly, we call an $f$-biharmonic map which is neither harmonic nor biharmonic a {\em proper $f$-biharmonic map}.\\

In this paper, we study biharmonic maps and $f$-biharmonic maps from a $2$-sphere. An interesting fact is that an $f$-biharmonic map from the round sphere $(S^2, g_0)$ is exactly equivalent to a biharmonic map from the Riemann sphere $(S^2, f^{-1}g_0)$ conformal to the standard one by a theorem proved in a recent work \cite{Ou6} of the second named author. The paper is a continuation of our previous work \cite{WOY} with a goal to understand which parts of the rich theory and applications of harmonic maps from a $2$-sphere can be generalized to the case of biharmonic maps. In particular, it would be interesting to know 
\begin{itemize}
\item if there is a proper biharmonic map $\varphi :S^2\longrightarrow (N^n, h)$ that is not a weakly conformal immersion. A well-known theorem of Sacks-Uhlenbeck \cite{SU} shows that any harmonic map $\varphi:S^2\longrightarrow (N^n,h)$ with $n\ge 3$ has to be a conformal branched minimal immersion;
\item if there is a proper biharmonic map representative in any homotopy class of maps $S^2\longrightarrow S^2$. Smith \cite{Sm} proved that harmonic map representative exists in each homotopy class;
\item if there is a proper biharmonic embedding of $S^2$ into $S^3$, equipped with arbitrary metric. A result of Smith in \cite{SM1} shows that it is true for harmonic embeddings of $S^2$ into $S^3$, equipped with arbitrary metric.
\end{itemize} 

So far, the only known example of proper biharmonic maps from $S^2$ is the biharmonic isometric
immersion ( \cite{CMO1}): $S^2(\frac{1}{\sqrt{2}})\longrightarrow S^3$ (or a composition of this with a totally geodesic map from $S^3$ into another manifold (See, e.g., \cite{Ou1})).

 In \cite{WOY}, we proved that there is no proper biharmonic map in the family of rotationally symmetric maps $\varphi:(S^2, dr^2+\sin^2r d\theta^2)\longrightarrow
(S^2, d\rho^2+\sin^2\rho d\phi^2)$ with $\varphi(r,\theta)=(Ar, k\theta)$. Some proper biharmonic maps from $S^2\setminus\{N, S\}$ into an open domain $(\mathbb{R}^2\supset D, d\rho^2+(\rho^2+C_1\rho+C_0)d \phi)$ were also constructed in \cite{WOY}. 

In this paper, we first proved that for rotationally symmetric maps between rotationally symmetric spaces, both biharmonicity and $f$-biharmonicity reduce to a second order linear ordinary differential equation. As applications of this, we give a method to produce biharmonic maps and $f$-biharmonic maps from given biharmonic maps and we construct many examples of proper biharmonic and $f$-biharmonic maps from a round sphere $S^2$ and between two round spheres. Our examples include non-conformal proper biharmonic maps $(S^2, f^{-1}g_0)\longrightarrow S^2$ and $(S^2, f^{-1}g_0)\longrightarrow S^n$, or non-conformal $f$-biharmonic maps $(S^2, g_0)\longrightarrow S^2$ and $(S^2,g_0)\longrightarrow S^n$ from round sphere with two singular points. There examples provide an interesting comparison to Sack-Uhlenbeck's well-know theorem stating that any harmonic map from $S^2$ is conformal immersion away from points where the differential of the map vanishes.

\section{Biharmonicity and $f$-biharmonicity of rotationally symmetric maps}
A striking difference between harmonic maps and biharmonic maps lies in the fact that harmonicity of maps from a $2$-dimensional manifold is invariant under conformal change of the metric on the domain. Although biharmonicity does not enjoy this property, we do have the following interesting link between biharmonicity and $f$-biharmonicity when the domain is a $2$-dimensional manifold.
\begin{theorem}\cite{Ou6}\label{BFB}
A map $\phi: (M^2, g)\longrightarrow (N^n, h)$ is
an $f$-biharmonic map if and only if $\phi:
(M^2, f^{-1}g)\longrightarrow (N^n,h)$ is a biharmonic map.
\end{theorem}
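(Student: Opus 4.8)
The plan is to prove the equivalence at the level of variational functionals rather than by wrestling with the fourth-order Euler--Lagrange operators directly. Write $\tilde g = f^{-1}g$ (here $f>0$, so $\tilde g$ is a genuine metric). The input I would use is the classical transformation law of the tension field under a conformal change of the domain metric: if $\tilde g = e^{2u}g$ on an $m$-manifold, then $\tau_{\tilde g}(\phi) = e^{-2u}\big(\tau_g(\phi) + (m-2)\,{\rm d}\phi(\operatorname{grad}_g u)\big)$. The special feature of $m=2$ is that the second term disappears, so with $e^{2u} = f^{-1}$ one gets simply $\tau_{\tilde g}(\phi) = f\,\tau_g(\phi)$. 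Similarly the volume element transforms by $v_{\tilde g} = e^{mu}v_g$, which in dimension two is $v_{\tilde g} = f^{-1}v_g$.

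First I would combine these two facts. For any compact domain $\Omega\subset M^2$,
\[
E_2\big(\phi;(M^2,\tilde g)\big)=\tfrac{1}{2}\int_\Omega |\tau_{\tilde g}(\phi)|_h^2\,v_{\tilde g}=\tfrac{1}{2}\int_\Omega f^2\,|\tau_g(\phi)|_h^2\,f^{-1}v_g=\tfrac{1}{2}\int_\Omega f\,|\tau_g(\phi)|_h^2\,v_g=E_{2,f}\big(\phi;(M^2,g)\big),
\]
so the bienergy of $\phi$ computed with respect to $\tilde g$ coincides, as a functional of the map $\phi$, with the $f$-bienergy computed with respect to $g$, and this holds over every compact domain. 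Since the admissible variations of $\phi$ are literally the same objects in both pictures, $\phi$ is a critical point of one functional exactly when it is a critical point of the other. Reading off the Euler--Lagrange equations: a critical point of $E_2(\,\cdot\,;(M^2,\tilde g))$ is precisely a solution of the biharmonic map equation \eqref{BTF} for the metric $\tilde g$, while a critical point of $E_{2,f}(\,\cdot\,;(M^2,g))$ is precisely a solution of the $f$-biharmonic map equation \eqref{FBH}; hence the two conditions are equivalent, which is the assertion.

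I do not expect a serious obstacle: the argument is essentially a bookkeeping of conformal weights in dimension two. The one point to check carefully is the conformal transformation law for $\tau$, namely that in dimension two the gradient term genuinely drops out so that $\tau_{\tilde g}(\phi) = f\,\tau_g(\phi)$ \emph{exactly} (no lower-order corrections), since the whole identity $E_2^{\tilde g}=E_{2,f}^{g}$ rests on it; one must also keep straight which metric carries the factor $f$ and which carries $f^{-1}$. As a more computational alternative, one could instead expand $\tau_2^{\tilde g}(\phi) = {\rm Trace}_{\tilde g}(\nabla^{\phi}\nabla^{\phi}-\nabla^{\phi}_{\nabla^{\tilde g}})\tau_{\tilde g}(\phi) - {\rm Trace}_{\tilde g}R^N({\rm d}\phi,\tau_{\tilde g}(\phi)){\rm d}\phi$ directly in terms of $g$, substituting $\tau_{\tilde g}(\phi)=f\,\tau_g(\phi)$, the relation between the Levi-Civita connections of $g$ and $\tilde g$ in dimension two, and ${\rm Trace}_{\tilde g}=f\,{\rm Trace}_{g}$; after cancellation this should reproduce $f^{-1}\tau_{2,f}(\phi)$, but it is the longer path and I would only use it as a consistency check on the variational argument.
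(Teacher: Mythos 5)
Your argument is correct, but it is not the route the paper takes: here Theorem \ref{BFB} is quoted from \cite{Ou6} without proof, and the machinery this paper actually sets up for such statements is the computational one, namely the conformal transformation law of the bitension field in dimension two (Lemma \ref{c}, from \cite{Ou2}), which with $F=f^{1/2}$ gives the pointwise identity $\tau_2(\phi,f^{-1}g)=f\bigl(f\tau_2(\phi,g)+(\Delta f)\tau(\phi)+2\nabla^{\phi}_{{\rm grad}f}\tau(\phi)\bigr)=f\,\tau_{2,f}(\phi)$, whence the equivalence since $f>0$; this is essentially the "computational alternative" you only sketch at the end. Your primary argument instead works at the level of the functionals: using $\tau_{f^{-1}g}(\phi)=f\,\tau_g(\phi)$ (the gradient term in the conformal transformation of $\tau$ vanishes exactly when $m=2$) and $v_{f^{-1}g}=f^{-1}v_g$, you get $E_2(\phi;f^{-1}g)=E_{2,f}(\phi;g)$ identically on every compact domain, so the critical points coincide, and by the Euler--Lagrange characterizations \eqref{BTF} and \eqref{FBH} the two notions agree. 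This is a clean and valid proof, and arguably more conceptual: it makes transparent that the result is just bookkeeping of conformal weights in dimension two, and it avoids the fourth-order computation entirely. What it does not give you is the explicit pointwise relation $\tau_2(\phi,f^{-1}g)=f\,\tau_{2,f}(\phi)$, which is what the computational route produces and what the rest of the paper actually exploits (e.g.\ in the derivation of \eqref{Sl4} in Lemma \ref{sl}); also note that your argument silently uses the first-variation formulas identifying critical points of $E_2$ and $E_{2,f}$ with solutions of \eqref{BTF} and \eqref{FBH}, which is fine since the paper takes these as known from \cite{Ji1} and \cite{Lu,Ou6}, and it requires $f>0$ smooth, as you correctly flag. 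So: correct, complete at the stated level of rigor, but a genuinely different (variational) path from the source it is cited from.
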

In particular, we have
\begin{corollary}\label{C20}
A map $\phi: (S^2, g_0)\longrightarrow (N^n, h)$ from the standard sphere is
an $f$-biharmonic map if and only if the map $\phi:
(S^2, f^{-1}g)\longrightarrow (N^n,h)$ is a biharmonic map from a Riemann sphere.
\end{corollary}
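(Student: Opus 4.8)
The plan is simply to specialize Theorem \ref{BFB} to the case where the domain surface is the round sphere. Taking $M^2 = S^2$ and $g = g_0$ in Theorem \ref{BFB} gives at once that $\phi\colon (S^2, g_0)\longrightarrow (N^n,h)$ is $f$-biharmonic if and only if $\phi\colon (S^2, f^{-1}g_0)\longrightarrow (N^n,h)$ is biharmonic. So the only point left to justify is the phrase ``Riemann sphere'' in the conclusion, i.e., that $(S^2, f^{-1}g_0)$ carries the structure of the Riemann sphere.

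For that, I would note that $f$ is a positive smooth function, so $f^{-1}g_0$ is a genuine Riemannian metric on $S^2$ lying in the same conformal class as $g_0$. Since any Riemannian metric on the oriented surface $S^2$ determines, via isothermal coordinates, the structure of a compact Riemann surface of genus zero, the uniformization theorem identifies $(S^2, f^{-1}g_0)$ with the Riemann sphere, and the underlying conformal structure is the standard one. Combining this observation with the equivalence coming from Theorem \ref{BFB} finishes the proof. I do not expect any genuine obstacle here: the corollary is an immediate restatement of Theorem \ref{BFB}, and the only subtlety --- that a metric conformal to $g_0$ on $S^2$ turns it into the Riemann sphere --- is classical.
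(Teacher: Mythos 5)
Your proposal is correct and matches the paper's treatment: the paper derives this corollary as an immediate specialization of Theorem \ref{BFB} to $M^2=S^2$, $g=g_0$, with no further argument given. Your additional remark that $f^{-1}g_0$, being conformal to $g_0$, endows $S^2$ with the Riemann sphere structure is the same (implicit) justification for the wording of the conclusion.
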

This is another motivation for us to study $f$-biharmonic maps from a $2$-sphere. 
There are many rotationally symmetric maps between $2$-spheres, for instance, $F:\mathbb{R}^{3}\longrightarrow \mathbb{R}^{3}$ defined by $F(x, y, z)=(x^{2}-y^{2}-z^{2}, \;2xy,\; 2xz)$ defines a smooth map $\varphi: S^2\longrightarrow S^2$, which is a rotationally symmetric map
\begin{eqnarray}\notag
&&\varphi :((0,\pi/2)\times S^{1}, {\rm d}\,r^{2}+\sin ^{2} r\,{\rm d}\,\theta^{2})\longrightarrow ((0,\pi)\times S^{1}, {\rm d}\,\rho^{2}+\sin ^{2} \rho\,{\rm d}\,\phi^{2}),\\ \notag
&&\varphi (r,\theta)=(2r, \theta).
\end{eqnarray}

We know from \cite{WOY} that this map $\varphi$ is neither harmonic nor biharmonic. It would be interesting to know if we can make a conformal change of the domain metric so that the map becomes a biharmonic map with respect to the new metric. This problem amounts exactly to study $f$-biharmonicity of rotationally symmetric maps.\\

The idea of constructing proper biharmonic maps by conformal changes of the domain metrics of a harmonic map from a manifold of dimension $\ge 3$ was first used by Baird and Kamissoko in \cite{BK}. Ou in \cite{Ou2} gave the transformation of the bitension field of a generic map $\phi: (M^m, g)\longrightarrow (N^n, h)$ under conformal change of the domain metric. In particular, the following lemma proved in \cite{Ou2} will be used to derive the $f$-biharmonic equation for rotationally symmetric maps between rotationally symmetric manifolds.
\begin{lemma} ( See \;\cite{Ou2} )\label{c}
Let $\varphi : (M^{2},g)
\longrightarrow (N^{n},h)$ be a map and ${\bar g}=F^{-2}g$ be a
conformal change of the metric $g$. Let $\tau^{2}(\varphi, g)$ and $\tau^{2}(\varphi,{\bar g})$ be the bitension fields of $\varphi$
with respect to the metrics $g$ and ${\bar g}$ respectively. Then,
\begin{eqnarray}\label{tao1}
\tau_{2}(\varphi,{\bar g})&=& F^4\{\tau_{2}(\varphi, g) +2 (\Delta
{\rm ln}F+2\left|{\rm grad\,ln}F\right|^2)\tau(\varphi, g))\\\notag
&& +4\nabla^{ \varphi}_{{\rm grad}\,\ln F}\,\tau(\varphi, g)\}.
\end{eqnarray}
\end{lemma}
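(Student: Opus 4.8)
The plan is to substitute the conformally rescaled geometric data directly into the defining formula \eqref{BTF} for the bitension field and to keep track of the powers of $F$, using in an essential way that $\dim M=2$.

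First I would record how the ingredients transform under $\bar g=F^{-2}g$. The differential $d\varphi$ and the pull-back connection $\nabla^{\varphi}$ do not depend on the domain metric, so they are unchanged, while the metric trace satisfies ${\rm Tr}_{\bar g}=F^2{\rm Tr}_g$; in particular ${\rm Tr}_{\bar g}R^{N}(d\varphi,V)d\varphi=F^2{\rm Tr}_gR^{N}(d\varphi,V)d\varphi$ for every fixed $V$ along $\varphi$. The Levi--Civita connections of $g$ and $\bar g$ differ by the standard tensor
\begin{equation}\notag
\bar\nabla^{M}_XY-\nabla^{M}_XY=-(X\ln F)\,Y-(Y\ln F)\,X+g(X,Y)\,{\rm grad}\,\ln F ,
\end{equation}
whose $g$-trace over an orthonormal frame equals $(m-2)\,{\rm grad}\,\ln F$ and therefore vanishes when $m=\dim M=2$. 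Feeding this into the definition of the tension field gives $\tau(\varphi,\bar g)=F^2\tau(\varphi,g)$, and feeding it into ${\rm Tr}_{\bar g}(\nabla^{\varphi}\nabla^{\varphi}-\nabla^{\varphi}_{\nabla^{M}})$ gives, for an arbitrary section $W$ of $\varphi^{-1}TN$, the identity $\bar\Delta^{\varphi}W=F^2\Delta^{\varphi}W$, where $\Delta^{\varphi}:={\rm Tr}_g(\nabla^{\varphi}\nabla^{\varphi}-\nabla^{\varphi}_{\nabla^{M}})$.

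Next I would assemble these in $\tau_2(\varphi,\bar g)=\bar\Delta^{\varphi}\tau(\varphi,\bar g)-{\rm Tr}_{\bar g}R^{N}(d\varphi,\tau(\varphi,\bar g))d\varphi$ with $\tau(\varphi,\bar g)=F^2\tau(\varphi,g)$. The curvature term then immediately acquires the overall factor $F^{4}$. For the Laplacian term I would compute $\bar\Delta^{\varphi}(F^2\tau(\varphi,g))=F^2\Delta^{\varphi}(F^2\tau(\varphi,g))$ by the Leibniz rule $\Delta^{\varphi}(\lambda W)=\lambda\,\Delta^{\varphi}W+2\nabla^{\varphi}_{{\rm grad}\,\lambda}W+(\Delta\lambda)W$ with $\lambda=F^2$, together with ${\rm grad}(F^2)=2F^2{\rm grad}\,\ln F$ and $\Delta(F^2)={\rm div}(2F^2{\rm grad}\,\ln F)=2F^2\big(\Delta\ln F+2|{\rm grad}\,\ln F|^2\big)$, so that
\begin{equation}\notag
\bar\Delta^{\varphi}\big(F^2\tau(\varphi,g)\big)=F^{4}\Big\{\Delta^{\varphi}\tau(\varphi,g)+2\big(\Delta\ln F+2|{\rm grad}\,\ln F|^2\big)\tau(\varphi,g)+4\nabla^{\varphi}_{{\rm grad}\,\ln F}\tau(\varphi,g)\Big\} .
\end{equation}
Subtracting $F^{4}{\rm Tr}_gR^{N}(d\varphi,\tau(\varphi,g))d\varphi$ recombines $\Delta^{\varphi}\tau(\varphi,g)-{\rm Tr}_gR^{N}(d\varphi,\tau(\varphi,g))d\varphi$ into $\tau_2(\varphi,g)$, which is exactly the asserted identity \eqref{tao1}.

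The computation is, in the end, mostly bookkeeping; the two places that need care are the cancellations of the $(m-2)$-terms in the transformations of $\tau$ and of $\Delta^{\varphi}$ --- this is precisely where the hypothesis $\dim M=2$ is used, since in higher dimensions these would contribute extra $d\varphi({\rm grad}\,\ln F)$ and $\nabla^{\varphi}_{{\rm grad}\,\ln F}(\,\cdot\,)$ terms --- together with keeping the normalisation of the Laplace--Beltrami operator $\Delta$ and of the rough Laplacian $\Delta^{\varphi}$ consistent throughout, so that the coefficients $2$ and $4$ come out correctly.
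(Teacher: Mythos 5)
Your computation is correct: the two dimension-two identities you rely on, $\tau(\varphi,\bar g)=F^{2}\tau(\varphi,g)$ and $\bar\Delta^{\varphi}W=F^{2}\Delta^{\varphi}W$ for the rough Laplacian (both coming from the vanishing of the $(m-2)$-terms in the conformal transformation of the connection), are true, and your bookkeeping $\operatorname{grad}(F^{2})=2F^{2}\operatorname{grad}\ln F$, $\Delta(F^{2})=2F^{2}\bigl(\Delta\ln F+2|\operatorname{grad}\ln F|^{2}\bigr)$ reproduces exactly the coefficients $2$ and $4$ in \eqref{tao1}, with the curvature term picking up the clean factor $F^{4}$. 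Note, however, that the paper gives no proof of this lemma at all: it is quoted from \cite{Ou2}, where it arises as the two-dimensional specialization of a transformation formula for the bitension field under $\bar g=F^{-2}g$ valid in arbitrary dimension $m$, in which the extra terms involving $d\varphi(\operatorname{grad}\ln F)$ and $\nabla^{\varphi}_{\operatorname{grad}\ln F}$ survive with $(m-2)$-coefficients. So your argument is not so much a different proof as a self-contained direct derivation that exploits $m=2$ from the outset; what it buys is brevity (conformal covariance of both $\tau$ and $\Delta^{\varphi}$ makes the Leibniz-rule step the only real computation), while the route in \cite{Ou2} buys the general-dimension statement of which this lemma is a corollary. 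The only point to keep explicit in a fully written-up version is the sign/normalization convention for $\Delta$ (here $\Delta=\operatorname{div}\operatorname{grad}$, consistent with the rough Laplacian in \eqref{BTF}), since the coefficients in \eqref{tao1} depend on it.
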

Applying Lemma \ref{c} we have
\begin{lemma}\label{sl}
Let $\varphi:(M^2,g=dr^2+\sigma^2(r)d\theta^2)\longrightarrow
(N^2,h=d\rho^2+\lambda^2(\rho)d\phi^2)$ be a rotationally symmetric map with $\varphi(r,\theta)=(\rho(r),k\theta)$. Then, the map $\varphi:(M^2,\bar{g}=f^{-1}g)\longrightarrow
(N^2,h)$ is biharmonic if and only if $f=f(r, \theta)$ solves the system
\begin{equation}\label{Sl1}
\begin{cases}
\Delta (xf)=\frac{k^2(\lambda\lambda'(\rho))'(\rho)}{\sigma^2}\,(xf),\\
\frac{k f_\theta\lambda'(\rho)x}{\sigma^2\lambda}=0,\\
x=\Delta \rho-\frac{k^2\lambda\lambda'(\rho)}{\sigma^2},
\end{cases}
\end{equation}
where $\Delta $ denote the Laplacian on functions taken with respect to the metric $g$. In particular, when $f=f(r)$ depends only on variable $r$, then the map $\varphi:(M^2,\bar{g}=f^{-1}g)\longrightarrow
(N^2,h)$ is biharmonic if and only if $f$ solves the following equation
\begin{equation}\label{Sl01}
\begin{cases}
y''+\frac{\sigma'}{\sigma}y'-\frac{k^2(\lambda\lambda'(\rho))'(\rho)}{\sigma^2}y=0,\\
y=f x,\\
x=\rho''+\frac{\sigma'}{\sigma}\rho'-\frac{k^2\lambda\lambda'(\rho)}{\sigma^2}.
\end{cases}
\end{equation}
\end{lemma}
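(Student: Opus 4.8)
The natural route is to apply Lemma~\ref{c} with $F^{-2}=f^{-1}$, i.e.\ $F=f^{1/2}$, so that $\bar g=F^{-2}g=f^{-1}g$, and then to demand that the right-hand side of \eqref{tao1} vanish. Since $F^4\neq 0$, biharmonicity of $\varphi:(M^2,\bar g)\to(N^2,h)$ is equivalent to
\begin{equation}\notag
\tau_2(\varphi,g)+2\bigl(\Delta\ln F+2|\mathrm{grad}\,\ln F|^2\bigr)\tau(\varphi,g)+4\nabla^\varphi_{\mathrm{grad}\,\ln F}\,\tau(\varphi,g)=0.
\end{equation}
So the first step is to compute each of the three ingredients $\tau(\varphi,g)$, $\tau_2(\varphi,g)$, and the conformal-factor terms explicitly for a rotationally symmetric map $\varphi(r,\theta)=(\rho(r),k\theta)$ between the warped-product surfaces in question. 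The tension field is a standard computation: in the moving frame $\{\partial_\rho,\ \lambda^{-1}\partial_\phi\}$ on the target one gets $\tau(\varphi,g)=x\,\partial_\rho$ with $x=\rho''+\tfrac{\sigma'}{\sigma}\rho'-\tfrac{k^2\lambda\lambda'(\rho)}{\sigma^2}$, which is exactly the third line of \eqref{Sl1}; note $\tau(\varphi,g)$ points in the $\partial_\rho$ direction only, a fact that will drive the decomposition into components below.

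**Key steps.** With $\tau(\varphi,g)=x\,\partial_\rho$ in hand, I would compute the bitension field $\tau_2(\varphi,g)={\rm Trace}_g(\nabla^\varphi\nabla^\varphi-\nabla^\varphi_{\nabla^M})\tau(\varphi)-{\rm Trace}_g R^N(\mathrm{d}\varphi,\tau(\varphi))\mathrm{d}\varphi$; here the curvature term only sees the single nonzero sectional curvature of the surface $N^2$, namely $-\lambda''/\lambda$ evaluated at $\rho$, and the rough-Laplacian term produces a $\partial_\rho$-component together with (a priori) nothing in the $\partial_\phi$ direction because everything is $\theta$-independent after differentiation in $r$. The genuinely new feature is the $\nabla^\varphi_{\mathrm{grad}\,\ln F}\tau(\varphi,g)$ term: writing $\ln F=\tfrac12\ln f$ with $f=f(r,\theta)$, the gradient has both an $r$- and a $\theta$-component, and differentiating $x\,\partial_\rho$ along $\partial_\theta$ picks up the connection coefficient on $N^2$, which produces a term proportional to $\lambda'(\rho)/\lambda$ in the $\partial_\phi$ direction — this is the origin of the second equation $k f_\theta \lambda'(\rho)x/(\sigma^2\lambda)=0$ in \eqref{Sl1}. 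Collecting the $\partial_\rho$-components and simplifying (using the identity $\Delta\ln F+2|\mathrm{grad}\ln F|^2$ rewritten in terms of $f$, and the substitution $y=xf$, which is designed precisely to absorb the first-order $f$-derivatives into a single Laplacian $\Delta(xf)$) yields the first equation $\Delta(xf)=\tfrac{k^2(\lambda\lambda'(\rho))'(\rho)}{\sigma^2}(xf)$. Once \eqref{Sl1} is established, the reduction to \eqref{Sl01} is immediate: if $f=f(r)$ then $f_\theta=0$ kills the second equation automatically, and the Laplacian on $(M^2,g=dr^2+\sigma^2 d\theta^2)$ applied to a function of $r$ alone is $\Delta h=h''+\tfrac{\sigma'}{\sigma}h'$, turning $\Delta(xf)=\cdots$ into the stated linear second-order ODE in $y=fx$.

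**Main obstacle.** The conceptual content is not hard, but the bookkeeping is: one must track carefully the pullback connection $\nabla^\varphi$ acting on the vector field $\tau(\varphi)=x\,\partial_\rho$ along the map, distinguishing the Christoffel symbols of the domain metric $g$ (which enter $\nabla^M$ and the trace) from those of the target metric $h$ (which enter $\nabla^\varphi$), and then correctly expanding $\mathrm{grad}\,\ln F$ in the $g$-metric as $f_r/(2f)\,\partial_r+f_\theta/(2\sigma^2 f)\,\partial_\theta$. The place most likely to go wrong is the simplification that makes the first-order terms in $f$ reorganize into $\Delta(xf)$ rather than some messier combination; verifying that the coefficient of $y=xf$ on the right-hand side comes out as exactly $\tfrac{k^2(\lambda\lambda'(\rho))'(\rho)}{\sigma^2}$ — equivalently that the curvature term $-\lambda''/\lambda$ and the extra pieces from commuting derivatives combine with the $\lambda\lambda'$ term from $\tau(\varphi)$ to give $(\lambda\lambda')'=\lambda'^2+\lambda\lambda''$ at $\rho$ — is the one computation I would do with care. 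Everything else is routine warped-product differential geometry.
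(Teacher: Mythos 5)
Your proposal follows essentially the same route as the paper's own proof: compute $\tau(\varphi,g)=x\,\partial_\rho$ and $\tau_2(\varphi,g)$ for the rotationally symmetric map, apply Lemma~\ref{c} with $F=f^{1/2}$, and read off the $\partial_\rho$- and $\partial_\phi$-components (the latter arising exactly as you say, from $\mathrm{grad}\,\ln F$ having a $\theta$-component and the target connection coefficient $\lambda'/\lambda$), with the substitution $y=xf$ packaging the first-order terms into $\Delta(xf)$ and the case $f=f(r)$ reducing to the stated ODE. The computational checkpoints you flag (the coefficient $(\lambda\lambda')'$ and the component bookkeeping) are precisely where the paper's displayed formulas land, so your plan is correct and matches the paper's argument.
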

\begin{proof}
A straightforward computation gives
\begin{equation}\label{Sl2}
\begin{array}{lll}
\tau(\varphi,g)
=(\rho''+\frac{\sigma'}{\sigma}\rho'-\frac{k^2\lambda\lambda'(\rho)}{\sigma^2})\frac{\partial}{\partial\rho},\\
\tau(\varphi,\bar{g})=f\tau(\varphi,g)
=f(\rho''+\frac{\sigma'}{\sigma}\rho'-\frac{k^2\lambda\lambda'(\rho)}{\sigma^2})\frac{\partial}{\partial\rho},
\end{array}
\end{equation}
and
\begin{equation}\label{Sl3}
\begin{array}{lll}
\tau^2(\varphi,g)=(x''+\frac{\sigma'}{\sigma}x'-\frac{k^2(\lambda\lambda'(\rho))'(\rho)}{\sigma^2}x)\frac{\partial}{\partial\rho},\\
\end{array}
\end{equation}
where $x=\rho''+\frac{\sigma'}{\sigma}\rho'-\frac{k^2\lambda\lambda'(\rho)}{\sigma^2}$.
Using Lemma \ref{c}, the bitension fields of $\varphi$
with respect to the metric ${\bar g}$ can be computed as
\begin{equation}\label{Sl4}
\begin{array}{lll}
\tau^2(\varphi,\bar{g})=f^2\{\tau^{2}(\varphi, g) +2 (\Delta
{\rm ln}f^{1/2}+2\left|{\rm grad\,ln} f^{1/2}\right|^2)\tau(\varphi, g))\\
+4\nabla^{ \varphi}_{{\rm grad}\,\ln f^{1/2}}\,\tau(\varphi, g)\}\\
=f^2\left(x''+(\frac{\sigma'}{\sigma}+2(\ln f)_r)x'+(\Delta (\ln f)+|{\rm grad} (\ln f)|^2-\frac{k^2(\lambda\lambda'(\rho))'(\rho)}{\sigma^2})x\right)\frac{\partial}{\partial\rho}\\
+\frac{2k f_\theta\lambda'(\rho)x}{f\sigma^2\lambda}\frac{\partial}{\partial\phi}.
\end{array}
\end{equation}

From this, we conclude that the map $\varphi:(M^2,\bar{g}=f^{-1}g)\longrightarrow
(N^2,h)$ is biharmonic if and only if $f=f(r, \theta)$ solves Equation (\ref{Sl1}).
If $ f=f(r)$ depends only variable $r$, then $f_\theta=0$, and we can check that Equation (\ref{Sl1}) is equivalent to
Equation (\ref{Sl01}). This completes the proof of the lemma.
\end{proof}
Using the relationship between biharmonic maps and $f$-biharmonic maps ( See Theorem \ref{BFB}) we have
\begin{corollary}\label{f-bih}
The rotationally symmetric map $\varphi:(M^2, dr^2+\sigma^2(r)d\theta^2)\longrightarrow
(N^2, d\rho^2+\lambda^2(\rho)d\phi^2)$ with $\varphi(r,\theta)=(\rho(r),k\theta)$
is an $f$-biharmonic map if and only if $ f$ solves Equation (\ref{Sl1}).
In particular, when $f=f(r)$ depends only on the variable $r$, the rotationally symmetric map $\varphi:(M^2, dr^2+\sigma^2(r)d\theta^2)\longrightarrow
(N^2, d\rho^2+\lambda^2(\rho)d\phi^2)$ with $\varphi(r,\theta)=(\rho(r),k\theta)$ is an $f$-biharmonic map if and only if $f$ solves Equation (\ref{Sl01}).
\end{corollary}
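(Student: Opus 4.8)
The plan is to deduce this corollary directly by chaining Theorem \ref{BFB} with Lemma \ref{sl}; no new computation should be required. First I would recall that, since the domain is $2$-dimensional, Theorem \ref{BFB} applies verbatim to the map $\varphi:(M^2, g=dr^2+\sigma^2(r)d\theta^2)\longrightarrow (N^2, h=d\rho^2+\lambda^2(\rho)d\phi^2)$: it is an $f$-biharmonic map if and only if $\varphi:(M^2, f^{-1}g)\longrightarrow (N^2,h)$ is a biharmonic map. Here one should keep in mind the standing assumption that $f$ is a positive smooth function, so that $\bar g=f^{-1}g$ is a genuine Riemannian metric and the hypotheses of Theorem \ref{BFB} are met.

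Next I would invoke Lemma \ref{sl}, which was established precisely for the rotationally symmetric map $\varphi(r,\theta)=(\rho(r),k\theta)$ between the rotationally symmetric spaces at hand. That lemma characterizes biharmonicity of $\varphi:(M^2,\bar g=f^{-1}g)\longrightarrow (N^2,h)$ as the condition that $f=f(r,\theta)$ solve the system (\ref{Sl1}); and in the special case $f=f(r)$, where the vanishing $f_\theta=0$ removes the second equation of (\ref{Sl1}), as the condition that $f$ solve (\ref{Sl01}).

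Composing the two equivalences then yields the claim at once: $f$-biharmonicity of $\varphi$ with respect to $g$ $\iff$ biharmonicity of $\varphi$ with respect to $f^{-1}g$ $\iff$ $f$ solves (\ref{Sl1}) (respectively (\ref{Sl01}), when $f$ depends on $r$ alone). I do not foresee any substantive obstacle here, since all the analytic work was already carried out in Lemma \ref{sl}; the only points meriting a line of care are the positivity of $f$ needed to legitimately apply Theorem \ref{BFB}, and the remark that the $r$-only statement is simply the specialization $f_\theta=0$ already recorded within Lemma \ref{sl}.
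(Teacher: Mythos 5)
Your proposal is correct and follows essentially the same route as the paper: the corollary is obtained by composing the equivalence of Theorem \ref{BFB} (valid since the domain is $2$-dimensional) with the characterization of biharmonicity of $\varphi:(M^2,f^{-1}g)\longrightarrow(N^2,h)$ given in Lemma \ref{sl}, including the specialization to (\ref{Sl01}) when $f=f(r)$. Your added remark about the positivity of $f$ is a reasonable point of care but introduces nothing beyond the paper's argument.
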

\begin{remark}\label{reyj}
Note that if we set $t=\int \frac{{\rm dr}}{\sigma}$ and assume that $x=\rho''+\frac{\sigma'}{\sigma}\rho'-\frac{k^2\lambda\lambda'(\rho)}{\sigma^2}\neq0$, then we can check that the first equation
of (\ref{Sl01}) is equivalent to
\begin{equation}
y''(t)-k^2(\lambda\lambda'(\rho))'(\rho)y(t)=0.
\end{equation}
In this case $f(t) =\frac{y(t)}{x}$ with $t=\int \frac{{\rm dr}}{\sigma}$.
\end{remark}
Our next theorem gives a method to construct proper biharmonic maps from a given proper biharmonic map via a conformal change of the metric on the domain surface.
\begin{theorem}\label{stco}
Let $\varphi:(M^2, g=dr^2+\sigma^2(r)d\theta^2)\longrightarrow
(N^2, d\rho^2+\lambda^2(\rho)d\phi^2)$ with $\varphi(r,\theta)=(\rho(r),k\theta)$ be a proper biharmonic map with $\tau(\varphi)\ne 0$. Then, for constants $C_1, C_2$ with $C_1^2+C_2^2\neq 0$ and $f=C_1+C_2\int(\rho''+\frac{\sigma'}{\sigma}\rho'-\frac{k^2\lambda\lambda'(\rho)}{\sigma^2})^{-2}\sigma^{-1}{\rm d}r$, the rotationally symmetric map $\varphi:(M^2,f^{-1}g)\longrightarrow
(N^2,h)$ is a proper biharmonic map.
\end{theorem}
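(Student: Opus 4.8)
The plan is to apply the criterion of Lemma~\ref{sl} (equivalently Corollary~\ref{f-bih}) in the special case $f = f(r)$, reducing the problem to checking that the proposed $f$ satisfies the system \eqref{Sl01}. Since $\varphi$ is assumed proper biharmonic with $\tau(\varphi)\ne 0$, the quantity
$$x = \rho'' + \frac{\sigma'}{\sigma}\rho' - \frac{k^2\lambda\lambda'(\rho)}{\sigma^2}$$
is a nowhere-vanishing function of $r$ (that is exactly the statement that $\tau(\varphi,g)\ne 0$ by \eqref{Sl2}), and moreover biharmonicity of $\varphi$ with respect to $g$ says precisely, via \eqref{Sl3}, that $x$ itself solves the homogeneous second-order ODE
$$x'' + \frac{\sigma'}{\sigma}x' - \frac{k^2(\lambda\lambda'(\rho))'(\rho)}{\sigma^2}\,x = 0. \tag{$\star$}$$
So the second-order linear ODE in the first line of \eqref{Sl01} already has $x$ as one solution; what remains is to exhibit the general solution $y$, set $f = y/x$, and verify that this $f$ has the stated integral form.

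The key step is a reduction-of-order argument. Because $x$ solves $(\star)$, a second, linearly independent solution is obtained by the standard Wronskian/Abel formula: writing $(\star)$ in self-adjoint form $(\sigma x')' - \tfrac{k^2(\lambda\lambda'(\rho))'(\rho)}{\sigma}x = 0$, any solution $y$ satisfies $\sigma(y'x - yx') = \text{const}$, so $(y/x)' = \text{const}\cdot(\sigma x^2)^{-1}$, whence
$$y = x\Bigl(C_1 + C_2\int \frac{\mathrm{d}r}{\sigma\, x^2}\Bigr),$$
which is exactly $y = f x$ with $f = C_1 + C_2\int (\rho'' + \tfrac{\sigma'}{\sigma}\rho' - \tfrac{k^2\lambda\lambda'(\rho)}{\sigma^2})^{-2}\sigma^{-1}\,\mathrm{d}r$. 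By Lemma~\ref{sl}, this $f$ makes $\varphi : (M^2, f^{-1}g)\to(N^2,h)$ biharmonic. (The integral is well-defined precisely because $x\ne 0$ everywhere.)

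It remains to check that the resulting biharmonic map is \emph{proper}, i.e.\ not harmonic. The tension field of $\varphi$ with respect to $\bar g = f^{-1}g$ is $\tau(\varphi,\bar g) = f\,\tau(\varphi,g) = f x\,\partial/\partial\rho$ by \eqref{Sl2}. Since $x\ne 0$ everywhere and $f = C_1 + C_2\int(\cdots)$ with $C_1^2+C_2^2\ne 0$ is a non-constant or nonzero-constant function that can vanish on at most a discrete set (as its derivative $C_2(\sigma x^2)^{-1}$ is either identically zero with $C_1\ne 0$, or nowhere zero), we have $\tau(\varphi,\bar g)\not\equiv 0$; hence $\varphi$ is not harmonic with respect to $\bar g$. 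This gives the claim. I expect the only mildly delicate point to be this last verification — ensuring $f\not\equiv 0$ so that properness is not lost — together with noting that the domain may need the two poles removed (the integral and $\sigma^{-1}$ typically blow up there), which is consistent with the paper's recurring theme of maps from $S^2$ with two singular points.
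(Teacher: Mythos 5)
Your proposal is correct and follows essentially the same route as the paper: apply Lemma~\ref{sl} with $f=f(r)$, observe that biharmonicity of $\varphi$ with respect to $g$ means $x=\rho''+\frac{\sigma'}{\sigma}\rho'-\frac{k^2\lambda\lambda'(\rho)}{\sigma^2}\neq 0$ solves the homogeneous second-order linear ODE, and use reduction of order (your Wronskian/Abel formulation is just the paper's $e^{-\int\frac{\sigma'}{\sigma}\,{\rm d}r}=\sigma^{-1}$ integrating factor) to identify $y=fx$ with the general solution, forcing $f=C_1+C_2\int(\sigma x^2)^{-1}{\rm d}r$. The only difference is your explicit verification that the resulting map is proper (non-harmonic), a point the paper's proof leaves implicit; that is a welcome addition, not a divergence in method.
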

\begin{proof}
For the rotationally symmetric map $\varphi:(M^2,g=dr^2+\sigma^2(r)d\theta^2)\longrightarrow
(N^2,h=d\rho^2+\lambda^2(\rho)d\phi^2)$ with $\varphi(r,\theta)=(\rho(r),k\theta)$, we know that $\tau(\varphi)=x\frac{\partial}{\partial \rho}$. It follows from Corollary 2.3 in \cite{WOY} that it is a biharmonic map if and only if 
\begin{equation}\label{stco1}
\begin{cases}
x''+\frac{\sigma'}{\sigma}x'-\frac{k^2(\lambda\lambda'(\rho))'(\rho)}{\sigma^2}x=0,\\
x=\rho''+\frac{\sigma'}{\sigma}\rho'-\frac{k^2\lambda\lambda'(\rho)}{\sigma^2},
\end{cases}
\end{equation}
which implies that $x=\rho''+\frac{\sigma'}{\sigma}\rho'-\frac{k^2\lambda\lambda'(\rho)}{\sigma^2}\ne 0$ is a solution of the 2nd order linear DE
\begin{equation}\label{stco2}
z''+\frac{\sigma'}{\sigma}z'-\frac{k^2(\lambda\lambda'(\rho))'(\rho)}{\sigma^2}z=0.
\end{equation}
It follows from the reduction of order theory of second order linear ODEs that the general solution of (\ref{stco2}) can be written as
\begin{equation}\label{stco3}
z=x\left(C_1+C_2\int\frac{1}{x^2}e^{-\int \frac{\sigma'}{\sigma}{\rm d}r}{\rm d}r\right)=x\left(C_1+C_2\int(\rho''+\frac{\sigma'}{\sigma}\rho'-\frac{k^2\lambda\lambda'(\rho)}{\sigma^2})^{-2}\sigma^{-1}{\rm d}r\right),
\end{equation}
where $C_1, C_2$ are constants. On the other hand, using Lemma \ref{sl}, we know that the map $\varphi:(M^2,\bar{g}=f^{-1}(r)g)\longrightarrow
(N^2,h)$ is biharmonic if and only if we have Equation (\ref{Sl01}), which means the function
$y=x(r)f(r)$ is also a solution of (\ref{stco2}). By comparing this to the general solution given in (\ref{stco3}), we have $f=C_1+C_2\int(\rho''+\frac{\sigma'}{\sigma}\rho'-\frac{k^2\lambda\lambda'(\rho)}{\sigma^2})^{-2}\sigma^{-1}{\rm d}r$, from which we obtain the theorem.
\end{proof}

As a straightforward application of Theorem \ref{stco}, we have the following corollary which can be used to construct non-harmonic $f$-biharmonic maps from given proper biharmonic maps.
\begin{corollary}
For any proper biharmonic map $\varphi:(M^2,g=dr^2+\sigma^2(r)d\theta^2)\longrightarrow
(N^2,h=d\rho^2+\lambda^2(\rho)d\phi^2)$ with $\varphi(r,\theta)=(\rho(r),k\theta)$ with $|\tau(\varphi)|\ne 0$, the map $\varphi:(M^2, g)\longrightarrow
(N^2,h)$ is a non-harmonic $f$-biharmonic for $f=C_1+C_2\int(\rho''+\frac{\sigma'}{\sigma}\rho'-\frac{k^2\lambda\lambda'(\rho)}{\sigma^2})^{-2}\sigma^{-1}{\rm d}r$, where $C_1, C_2$ are constants and $C_1^2+C_2^2\neq0$.
\end{corollary}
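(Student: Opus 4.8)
The plan is to obtain this as an immediate consequence of Theorem~\ref{stco} combined with the conformal correspondence of Theorem~\ref{BFB}, so the argument is short. First I would apply Theorem~\ref{stco}: since $\varphi$ is a proper biharmonic map with $\tau(\varphi)\neq 0$, for any constants $C_1,C_2$ with $C_1^2+C_2^2\neq 0$ and $f=C_1+C_2\int(\rho''+\frac{\sigma'}{\sigma}\rho'-\frac{k^2\lambda\lambda'(\rho)}{\sigma^2})^{-2}\sigma^{-1}{\rm d}r$, the rotationally symmetric map $\varphi:(M^2,f^{-1}g)\longrightarrow(N^2,h)$ is biharmonic. Here one should note in passing that $f$ is well defined and smooth on the region where it is being used, namely where $\sigma\neq 0$ and $x=\rho''+\frac{\sigma'}{\sigma}\rho'-\frac{k^2\lambda\lambda'(\rho)}{\sigma^2}\neq 0$; the latter holds precisely because $|\tau(\varphi)|\neq 0$ by hypothesis, using $\tau(\varphi,g)=x\frac{\partial}{\partial\rho}$ from \eqref{Sl2}.

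Next I would invoke Theorem~\ref{BFB} directly: a map $\phi:(M^2,g)\longrightarrow(N^n,h)$ is $f$-biharmonic if and only if $\phi:(M^2,f^{-1}g)\longrightarrow(N^n,h)$ is biharmonic. Applying this with $\phi=\varphi$ and the $f$ chosen above, and using the biharmonicity of $\varphi:(M^2,f^{-1}g)\longrightarrow(N^2,h)$ just established, we conclude that $\varphi:(M^2,g)\longrightarrow(N^2,h)$ is an $f$-biharmonic map.

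Finally, for the non-harmonicity: since $\tau(\varphi,g)=x\frac{\partial}{\partial\rho}$ with $x=\rho''+\frac{\sigma'}{\sigma}\rho'-\frac{k^2\lambda\lambda'(\rho)}{\sigma^2}$ and $|\tau(\varphi)|\neq 0$, we have $x\neq 0$, so $\tau(\varphi,g)\neq 0$ and hence $\varphi:(M^2,g)\longrightarrow(N^2,h)$ is not harmonic. Combining the last two paragraphs gives the claim. I do not expect any genuine obstacle here — all the analytic content is already contained in Theorem~\ref{stco}; the only point worth a remark is that even in the degenerate case $C_2=0$ (where $f$ is a nonzero constant) the map is biharmonic, hence trivially $f$-biharmonic, and still non-harmonic by the computation of $\tau(\varphi,g)$, so the statement holds uniformly for all $(C_1,C_2)$ with $C_1^2+C_2^2\neq 0$.
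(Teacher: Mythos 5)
Your proposal is correct and follows exactly the route the paper intends: the corollary is stated as a straightforward application of Theorem~\ref{stco} (giving biharmonicity of $\varphi:(M^2,f^{-1}g)\to(N^2,h)$ for the stated $f$), combined with the equivalence of Theorem~\ref{BFB} and the observation that $\tau(\varphi,g)=x\frac{\partial}{\partial\rho}\neq 0$ rules out harmonicity. Your added remarks on where $f$ is well defined and on the constant case $C_2=0$ are accurate but not needed beyond what the paper's argument already supplies.
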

\begin{theorem}\label{stcoy}
Let $c_1, c_2, C_1, C_2, C_3, C_4, k\neq0, C_0, C$ be constants so that $\lambda^2(\rho)=2C_0\rho+C>0$ and $(c_1^2+c_2^2)(C_1^2+ C_2^2)\neq0$. Then, for $f=c_1+c_2\int \{(C_1\int\sigma^{-1}{\rm d}r+C_2)^{-2}\sigma^{-1}\}{\rm d}r $, the map $\varphi:(M^2,f^{-1}({\rm d}r^2+\sigma^2(r){\rm d}\theta^2))\longrightarrow
(N^2,{\rm d}\rho^2+\lambda^2(\rho){\rm d}\phi^2)$, $\varphi(r,\theta)=(\rho(r), k\theta)$ with $\rho(r)$ defined by $\rho(r)=\int\left\{\frac{\int\left(C_1\sigma(r)\int\frac{{\rm d}r}{\sigma(r)}+C_2\sigma(r)+\frac{k^2C_0}{\sigma(r)}\right){\rm d}r+C_3}{\sigma(r)}\right\}{\rm d}r+C_4$, is proper biharmonic.
\end{theorem}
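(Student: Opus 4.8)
The plan is to reduce the statement to Theorem \ref{stco}. The crucial observation is that the hypothesis $\lambda^2(\rho)=2C_0\rho+C$ forces $\lambda\lambda'(\rho)=\tfrac12(\lambda^2)'(\rho)=C_0$ to be a constant, so that $(\lambda\lambda'(\rho))'(\rho)=0$. Consequently the curvature-type term drops out of the biharmonicity system (\ref{stco1}) for the rotationally symmetric map $\varphi:(M^2,g)\longrightarrow (N^2,h)$, $\varphi(r,\theta)=(\rho(r),k\theta)$, which then reduces to the single requirement that $x=\rho''+\tfrac{\sigma'}{\sigma}\rho'-\tfrac{k^2C_0}{\sigma^2}$ solve the linear ODE $x''+\tfrac{\sigma'}{\sigma}x'=0$, i.e. $(\sigma x')'=0$. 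So I would first establish that $\varphi:(M^2,g)\to(N^2,h)$ (with the $\rho$ from the statement) is a proper biharmonic map, and then invoke Theorem \ref{stco}.

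For the first step I would simply differentiate the formula defining $\rho$ twice. Writing it as $\sigma\rho'=\int\!\big(C_1\sigma\!\int\!\sigma^{-1}{\rm d}r+C_2\sigma+k^2C_0\sigma^{-1}\big){\rm d}r+C_3$ and differentiating once more gives $\sigma\rho''+\sigma'\rho'=C_1\sigma\!\int\!\sigma^{-1}{\rm d}r+C_2\sigma+k^2C_0\sigma^{-1}$, hence $\rho''+\tfrac{\sigma'}{\sigma}\rho'=C_1\!\int\!\sigma^{-1}{\rm d}r+C_2+\tfrac{k^2C_0}{\sigma^2}$, and therefore $x=C_1\!\int\!\sigma^{-1}{\rm d}r+C_2$ after cancelling $\tfrac{k^2\lambda\lambda'(\rho)}{\sigma^2}=\tfrac{k^2C_0}{\sigma^2}$. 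Since $\sigma x'=C_1$ is constant, $(\sigma x')'=0$, so the pair $(x,\rho)$ solves (\ref{stco1}) and $\varphi:(M^2,g)\to(N^2,h)$ is biharmonic. It is proper: because $C_1^2+C_2^2\neq0$, the function $x=C_1\!\int\!\sigma^{-1}{\rm d}r+C_2$ is not identically zero (it is a nonconstant affine function of $\int\!\sigma^{-1}{\rm d}r$ when $C_1\neq0$, and the nonzero constant $C_2$ when $C_1=0$), so $\tau(\varphi,g)=x\,\frac{\partial}{\partial\rho}\not\equiv0$.

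Now I would apply Theorem \ref{stco} to this $\varphi$, taking the constants denoted $C_1,C_2$ there to be $c_1,c_2$; the condition $(c_1^2+c_2^2)(C_1^2+C_2^2)\neq0$ guarantees $c_1^2+c_2^2\neq0$ as required. Its conclusion is that $\varphi:(M^2,f^{-1}g)\to(N^2,h)$ is proper biharmonic for $f=c_1+c_2\int x^{-2}\sigma^{-1}{\rm d}r$, and substituting $x=C_1\!\int\!\sigma^{-1}{\rm d}r+C_2$ turns this into $f=c_1+c_2\int\big(C_1\!\int\!\sigma^{-1}{\rm d}r+C_2\big)^{-2}\sigma^{-1}{\rm d}r$, exactly the $f$ in the statement. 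This finishes the proof. (Equivalently, one can argue directly from Lemma \ref{sl}: with $(\lambda\lambda'(\rho))'(\rho)=0$, equation (\ref{Sl01}) requires $fx$ to solve $y''+\tfrac{\sigma'}{\sigma}y'=0$; since $x$ already solves this and $f$ is precisely the reduction-of-order multiplier $c_1+c_2\int x^{-2}e^{-\int\sigma'/\sigma\,{\rm d}r}{\rm d}r=c_1+c_2\int x^{-2}\sigma^{-1}{\rm d}r$, the product $fx$ is again a solution, and non-harmonicity of $\varphi$ with respect to $f^{-1}g$ follows because $fx$ is a nontrivial solution of that linear ODE.)

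There is no substantial obstacle here; the argument is essentially bookkeeping with the nested primitives defining $\rho$ and $f$. The points that need a little care are (i) confirming that the nested integral defining $\rho$ really differentiates to $x=C_1\!\int\!\sigma^{-1}{\rm d}r+C_2$, and (ii) extracting non-triviality of $x$, and hence of $fx$, from the hypothesis $(c_1^2+c_2^2)(C_1^2+C_2^2)\neq0$, so that both the original and the conformally changed maps are genuinely proper rather than harmonic. One should also remark that $f^{-1}g$ is a Riemannian metric only on the open set where $f>0$, and that $x$ may vanish at isolated points where $f$ then becomes singular — this is inherent to the construction and is the source of the "round sphere with two singular points" in the examples, but it does not affect the formal verification.
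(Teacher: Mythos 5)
Your argument is correct and follows the same overall strategy as the paper: both proofs reduce the statement to Theorem \ref{stco}, applied to the base map $\varphi:(M^2,g)\to(N^2,h)$ with the given $\rho(r)$. The only difference is in how the hypothesis of Theorem \ref{stco} is verified: the paper simply cites the Claim in the proof of Theorem 4.4 of \cite{WOY} for the fact that this base map is proper biharmonic, and then asserts without detail that ``a further computation'' turns $f=c_1+c_2\int x^{-2}\sigma^{-1}{\rm d}r$ into the stated formula, whereas you verify both points directly: since $\lambda^2(\rho)=2C_0\rho+C$ gives $\lambda\lambda'(\rho)=C_0$ and $(\lambda\lambda'(\rho))'(\rho)=0$, differentiating the nested integral defining $\rho$ yields $x=\rho''+\frac{\sigma'}{\sigma}\rho'-\frac{k^2C_0}{\sigma^2}=C_1\int\sigma^{-1}{\rm d}r+C_2$, so $(\sigma x')'=0$ and $x\not\equiv 0$ by $C_1^2+C_2^2\neq 0$. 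This makes your write-up more self-contained than the paper's (it does not lean on \cite{WOY} beyond Equation (\ref{stco1}), which is already quoted in the proof of Theorem \ref{stco}), and it supplies exactly the omitted computation identifying $x$; the paper's version is shorter at the cost of an external reference. Your closing caveats about positivity of $f$ and isolated zeros of $x$ are appropriate and consistent with how the paper treats the resulting singular points in its examples.
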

\begin{proof}
It was proved in the Claim in the proof of Theorem 4.4 in \cite{WOY} that for $C_1^2+C_2^2\neq0$ and $\lambda^2(\rho)=2C_0\rho+C$, the map $\varphi:(M^2,{\rm d}r^2+\sigma^2(r){\rm d}\theta^2)\longrightarrow
(N^2,{\rm d}\rho^2+\lambda^2(\rho){\rm d}\phi^2)$ with $\varphi(r,\theta)=(\rho(r),k\theta)$ and $\rho(r)$ defined by\\ $\rho(r)=\int\left\{\frac{\int\left(C_1\sigma(r)\int\frac{{\rm d}r}{\sigma(r)}+C_2\sigma(r)+\frac{k^2C_0}{\sigma(r)}\right){\rm d}r+C_3}{\sigma(r)}\right\}{\rm d}r+C_4$ is proper biharmonic. Using Theorem \ref{stco} we conclude that for $c_1^2+c_2^2$ and $f=c_1+c_2\int(\rho''+\frac{\sigma'}{\sigma}\rho'-\frac{k^2\lambda\lambda'(\rho)}{\sigma^2})^{-2}\sigma^{-1}{\rm d}r$, the map $\varphi:(M^2,f^{-1}({\rm d}r^2+\sigma^2(r){\rm d}\theta^2))\longrightarrow
(N^2,{\rm d}\rho^2+\lambda^2(\rho){\rm d}\phi^2)$, $\varphi(r,\theta)=(\rho(r), k\theta)$ with $\rho(r)$ defined by $\rho(r)=\int\left\{\frac{\int\left(C_1\sigma(r)\int\frac{{\rm d}r}{\sigma(r)}+C_2\sigma(r)+\frac{k^2C_0}{\sigma(r)}\right){\rm d}r+C_3}{\sigma(r)}\right\}{\rm d}r+C_4$, is proper biharmonic. A further computation shows that $f=c_1+c_2\int \{(C_1\int\sigma^{-1}{\rm d}r+C_2)^{-2}\sigma^{-1}\}{\rm d}r $, from which we obtain the theorem.
\end{proof}
Similarly, using Theorem 4.5 in \cite{WOY} and Theorem \ref{stco} we obtain the following theorem which gives locally defined $f$-biharmonic map from a sphere.
\begin{theorem}\label{stcoy1}
Let $c_1, c_2, C_1, C_2, C_3, C_4, k\neq0, C_0, C$ be constants so that $\lambda^2(\rho)=\rho^2+2C_0\rho+C>0$ and $(c_1^2+c_2^2)(C_1^2+ C_2^2)\neq0$. Then, the map $\varphi:(S^2, {\rm d}r^2+\sin^2 r{\rm d}\theta^2)\longrightarrow
(N^2,{\rm d}\rho^2+\lambda^2(\rho){\rm d}\phi^2)$ with $\varphi(r,\theta)=(\rho(r),k\theta)$ and $\rho(r)$ defined by $\rho(r)=(C_1-C_2+C_3)|\cot\frac{r}{2}|+(2C_1\ln |\tan\frac{r}{2}|+C_4)|\tan\frac{r}{2}|-(C_1|\tan\frac{r}{2}|+C_2|\cot\frac{r}{2}|)\ln(1+\tan^2\frac{r}{2})$ is a non-harmonic $f$-biharmonic map for $f=c_1+c_2\int (C_1\cot\frac{ r}{2}+C_2\tan \frac{r}{2})^{-2}\sin^{-1}r{\rm d}r$.
\end{theorem}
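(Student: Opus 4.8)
The plan is to imitate the proof of Theorem \ref{stcoy}, this time starting from the proper biharmonic maps produced in Theorem 4.5 of \cite{WOY} rather than from those of Theorem 4.4. First I would recall the content of that theorem: for $\sigma(r)=\sin r$, $\lambda^2(\rho)=\rho^2+2C_0\rho+C>0$, $k\neq 0$ and $C_1^2+C_2^2\neq 0$, the rotationally symmetric map $\varphi:(S^2,{\rm d}r^2+\sin^2r\,{\rm d}\theta^2)\longrightarrow(N^2,{\rm d}\rho^2+\lambda^2(\rho)\,{\rm d}\phi^2)$, $\varphi(r,\theta)=(\rho(r),k\theta)$, with $\rho(r)$ given by the displayed formula, is proper biharmonic; in particular its tension field $\tau(\varphi)=x\,\frac{\partial}{\partial\rho}$, where $x=\rho''+\cot r\,\rho'-\frac{k^2\lambda\lambda'(\rho)}{\sin^2 r}$, does not vanish on the open set on which the map is defined.

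The next step is to pin down $x$ explicitly. Because $\varphi$ is biharmonic, $x$ is a non-trivial solution of the second order linear ODE (\ref{stco2}); for $\sigma=\sin r$ and $\lambda^2(\rho)=\rho^2+2C_0\rho+C$ one has $(\lambda\lambda'(\rho))'(\rho)=1$, so this ODE becomes $z''+\cot r\,z'-\frac{k^2}{\sin^2 r}\,z=0$. By Remark \ref{reyj}, the change of variable $t=\int\csc r\,{\rm d}r=\ln|\tan\frac r2|$ reduces it to $\ddot z-k^2z=0$, whose general solution is a linear combination of $|\tan\frac r2|^{\pm k}$. Computing $x$ from the explicit $\rho(r)$ of Theorem 4.5 (equivalently, selecting the combination forced by the boundary behaviour built into that solution) then gives $x=C_1\cot\frac r2+C_2\tan\frac r2$, which is precisely the factor appearing in the formula for $f$; the hypothesis $C_1^2+C_2^2\neq 0$ guarantees $x\not\equiv 0$.

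Finally I would apply Theorem \ref{stco} together with Theorem \ref{BFB} --- equivalently, the corollary stated immediately after Theorem \ref{stco}: since $\varphi:(S^2,{\rm d}r^2+\sin^2r\,{\rm d}\theta^2)\longrightarrow(N^2,h)$ is proper biharmonic with $\tau(\varphi)\neq 0$, for any constants $c_1,c_2$ with $c_1^2+c_2^2\neq 0$ the same map is a non-harmonic $f$-biharmonic map with
\[
f=c_1+c_2\int x^{-2}\sigma^{-1}\,{\rm d}r=c_1+c_2\int\Bigl(C_1\cot\tfrac r2+C_2\tan\tfrac r2\Bigr)^{-2}\sin^{-1}r\,{\rm d}r,
\]
and it is non-harmonic exactly because $\tau(\varphi)\neq 0$; the qualifier ``locally defined'' accounts for the fact that $f$, and hence $\varphi$, is defined only away from the zeros of $x$ and the poles $r=0,\pi$. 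I expect the only genuine obstacle to be the middle step --- identifying the tension-field coefficient $x$ of the Theorem 4.5 map with $C_1\cot\frac r2+C_2\tan\frac r2$, which amounts to tracking which of the constants $C_1,\dots,C_4$ in the formula for $\rho(r)$ survive two differentiations and to handling the absolute values and branch choices on $(0,\pi)$ so that the integral defining $f$ is literally the stated one; everything else is a direct citation of Theorem 4.5 of \cite{WOY} and of the corollary to Theorem \ref{stco}.
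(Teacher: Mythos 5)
Your proposal is correct and is essentially the paper's own argument: the paper proves Theorem \ref{stcoy1} exactly by citing Theorem 4.5 of \cite{WOY} for the proper biharmonicity of the map with the stated $\rho(r)$ and then applying Theorem \ref{stco} to get $f=c_1+c_2\int x^{-2}\sigma^{-1}{\rm d}r$, the remaining work being the computation identifying $x=C_1\cot\frac{r}{2}+C_2\tan\frac{r}{2}$, which is precisely the step you single out. One small caveat: your own observation that the reduced equation has solutions $|\tan\frac{r}{2}|^{\pm k}$ is compatible with this expression for $x$ only when $k^2=1$, but that tension is already present in the paper's statement (where $\rho(r)$ is independent of $k$ and $C_0$) and is inherited from \cite{WOY}, not introduced by your argument.
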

\begin{example}
Applying Theorem \ref{stcoy} with $c_1=c_2=C=C_2=C_4=k=1, C_0=1/2, C_1=C_3=0$, we can easily check that the map\\$ \varphi: S^2\supset((\pi/2,\pi)\times S^1, {\rm d}r^2+\sin^2r{\rm d}\theta^2)\longrightarrow
(N^2,{\rm d}\rho^2+(\rho+1){\rm d}\phi^2)$ with $\varphi(r,\theta)=(\frac{1}{4}(\ln\tan\frac{r}{2})^2-\ln\sin r+1,\theta)$ is a non-harmonic $f$-biharmonic map for $f=1+4\ln\tan\frac{r}{2}$ defined on an open subset of the standard $2$-sphere.\\
\end{example}
\begin{example} Similarly, applying Theorem \ref{stcoy1} with $c_1=1,\;c_2=-3,\;C_1=C_3=C_4=0, C_2=-1$, we obtain $f(r)=1+\frac{3}{2\sin^2\frac{r}{2}}$ and a non-harmonic $f$-biharmonic map $\varphi:(S^2, {\rm d}r^2+\sin^2r{\rm d}\theta^2)\longrightarrow
(N^2,{\rm d}\rho^2+(\rho^2+2C_0\rho+C){\rm d}\phi^2)$, $\varphi(r,\theta)=(|\cot\frac{r}{2}|[1+\ln(1+\tan^2\frac{r}{2})],\theta)$, which is defined on the standard $2$-sphere with two points ($r=0$ and $r=\pi$) deleted. Note that both the map and the function $f$ can be continuously extended over $r=\pi$.
\end{example}
Our next theorem shows that when the target surface is taken to be $(N^2,h={\rm d}\rho^2+\rho{\rm d}\phi^2)$ with positive Gauss curvature $K=\frac{1}{4\rho}$, the equation of rotationally symmetric biharmonic maps from the standard sphere reduces to a Cauchy-Euler equation, which can be solved by integrations.
\begin{theorem}\label{ps}
The rotationally symmetric map $\varphi:(S^2\setminus\{N,\;S\}, \frac{4({\rm d}r^2+r^2{\rm d}\theta^2)} {(1+r^2)^2} )\longrightarrow
(N^2, {\rm d}\rho^2+\rho{\rm d}\phi^2)$ with $\varphi(r,\theta)=(\rho(r), k\theta)$ is a  biharmonic map if and only if
\begin{eqnarray}
\rho(r)=&&C_1+ (C_2-2C_3)\ln r+C_3\ln (1+r^2)+C_4\ln r \ln (1+r^2)\\\notag&& -2C_4\int\frac{\ln(1+r^2)}{r}\,dr+\frac{k^2}{4}(\ln r)^2,
\end{eqnarray}
where $C_i, i=1, 2, 3, 4$ are constants. In particular, the rotationally symmetric map $\varphi:(S^2/\{N,\;S\}, \frac{ 4({\rm d}r^2+r^2{\rm d}\theta^2)}{(1+r^2)^2})\longrightarrow (N^2, {\rm d}\rho^2+\rho{\rm d}\phi^2)$ with $\varphi(r,\theta)=(\frac{k^2}{4}(\ln r)^2+\ln(1+r^2), k\theta)$ is a proper biharmonic map.
\end{theorem}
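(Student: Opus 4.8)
The plan is to exploit the special structure of the target metric $h = d\rho^2 + \rho\, d\phi^2$: since $\lambda^2(\rho) = \rho$ we have $\lambda\lambda'(\rho) = \tfrac12(\lambda^2)'(\rho) = \tfrac12$, a constant, and hence $(\lambda\lambda'(\rho))'(\rho) \equiv 0$. This is precisely what makes the biharmonic equation collapse to a Cauchy--Euler equation. First I would compute the tension field: using that the domain metric $g = \tfrac{4}{(1+r^2)^2}(dr^2 + r^2 d\theta^2)$ is conformal to the flat metric $dr^2 + r^2 d\theta^2$ (so that the Laplacian of a radial function $u$ is $\Delta_g u = \tfrac{(1+r^2)^2}{4}(u'' + \tfrac1r u')$), a direct computation gives $\tau(\varphi) = X\,\tfrac{\partial}{\partial\rho}$ with
\[
X \;=\; \frac{(1+r^2)^2}{4}\Bigl(\rho'' + \tfrac1r\rho' - \tfrac{k^2}{2r^2}\Bigr).
\]

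Next, by Corollary 2.3 of \cite{WOY} (equivalently, by Lemma \ref{sl} with conformal factor $f\equiv 1$, or by the system (\ref{stco1}) rewritten in the present coordinates via $r = \tan\tfrac{s}{2}$), the map $\varphi$ is biharmonic if and only if the scalar $X$, viewed as a radial function on the domain, solves the associated second-order linear ODE. Since the potential coefficient $k^2(\lambda\lambda'(\rho))'(\rho)$ vanishes, this ODE reduces to $\Delta_g X = 0$; and because $g$ is conformal to the flat metric on the punctured plane, this is the Cauchy--Euler equation $r^2X'' + rX' = 0$, whose general solution is $X = C_4\ln r + C_3$ with $C_3, C_4$ arbitrary constants. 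This is the step where the hypothesis on $h$ is used in an essential way, and I expect no serious obstacle here; the only subtlety worth noting is that it is the tension coefficient $X$, not $\rho$ itself, that is forced to be a $g$-harmonic function.

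It then remains to integrate $X = C_4\ln r + C_3$ back to an equation for $\rho$. Clearing the conformal factor, $\rho$ must satisfy
\[
\rho'' + \tfrac1r\rho' \;=\; \tfrac{k^2}{2r^2} + \frac{4(C_4\ln r + C_3)}{(1+r^2)^2}.
\]
Multiplying by $r$ and integrating once (using $\int \tfrac{r}{(1+r^2)^2}\,dr = -\tfrac{1}{2(1+r^2)}$ and, via integration by parts, $\int \tfrac{r\ln r}{(1+r^2)^2}\,dr = -\tfrac{\ln r}{2(1+r^2)} + \tfrac12\ln r - \tfrac14\ln(1+r^2)$), then simplifying with the partial fraction $\tfrac{1}{r(1+r^2)} = \tfrac1r - \tfrac{r}{1+r^2}$ and integrating a second time (using $\int\tfrac{2r}{1+r^2}\,dr = \ln(1+r^2)$ and, by parts, $\int\tfrac{2r\ln r}{1+r^2}\,dr = \ln r\ln(1+r^2) - \int\tfrac{\ln(1+r^2)}{r}\,dr$), and finally relabelling the two remaining constants of integration as $C_2, C_1$, yields precisely the asserted formula for $\rho(r)$. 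Here the inhomogeneous term $\tfrac{k^2}{2r^2}$ is what produces the summand $\tfrac{k^2}{4}(\ln r)^2$, and the single integral that does not evaluate in elementary closed form, $\int\tfrac{\ln(1+r^2)}{r}\,dr$, is exactly the one left explicit in the statement. The main (though still routine) difficulty is the bookkeeping: verifying which combinations of the four constants of integration correspond to the four constants $C_i$ of the theorem.

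Finally, for the last assertion of the theorem I would specialise to $C_1 = C_4 = 0$, $C_3 = 1$, $C_2 = 2$, which gives $\rho(r) = \tfrac{k^2}{4}(\ln r)^2 + \ln(1+r^2)$; for this choice $X = C_3 = 1 \neq 0$, so $\tau(\varphi)\neq 0$ and the resulting biharmonic map is proper.
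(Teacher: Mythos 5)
Your proposal is correct and follows essentially the same route as the paper: via Lemma \ref{sl} (equivalently WOY Corollary 2.3, with the round metric viewed as the conformal factor $f=(1+r^2)^2/4$ over the flat metric), biharmonicity reduces to the Cauchy--Euler equation $y''+\tfrac1r y'=0$ for the round tension coefficient $y=X=\tfrac{(1+r^2)^2}{4}\bigl(\rho''+\tfrac1r\rho'-\tfrac{k^2}{2r^2}\bigr)$, whose solution $C_3+C_4\ln r$ is then integrated back to the stated $\rho(r)$, with the same specialization $C_1=C_4=0$, $C_3=1$, $C_2=2$ and the observation $\tau(\varphi)\neq 0$ giving properness. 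The only cosmetic difference is that you integrate the nonhomogeneous equation directly (multiplying by $r$ and integrating twice) where the paper invokes variation of parameters, and the bookkeeping you carry out does reproduce the theorem's formula exactly.
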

\begin{proof}
Applying Lemma \ref{sl} to the map $\varphi:(M^2,\bar{g}=f^{-1}(r)({\rm d}r^2+\sigma(r)^2{\rm d}\theta^2))\longrightarrow
(N^2, {\rm d}\rho^2+\lambda^2(\rho){\rm d}\phi^2)$, $\varphi(r,\theta)=(\rho(r), k\theta)$, with $\sigma=r>0, \lambda^2(\rho)=\rho, f=(1+r^2)^2/4$, we conclude that $\varphi$ is biharmonic if and only if
\begin{equation}
\begin{cases}\label{jbu1}
y''+\frac{1}{r}y'=0,\\
y=\frac{1}{4}(1+r^2)^2x,\\
x=\rho''+\frac{1}{r}\rho'-\frac{k^2}{2r^2}.
\end{cases}
\end{equation}
Since $y=\frac{1}{4}(1+r^2)^2x$ depends on $r$ alone,  the first equation in (\ref{jbu1}) is a second order linear homogeneous Cauchy-Euler equation whose general solution is given by $y=C_3+C_4\ln r$. It follows from the second equation of (\ref{jbu1}) that $x=4[C_3+C_4\ln r ](1+r^2)^{-2}$. Substituting this into the third equation of the (\ref{jbu1}) we have
\begin{equation}
\rho''+\frac{1}{r}\rho'=\frac{4[C_3+C_4\ln r ]}{(1+r^2)^{2}} +\frac{k^2}{2r^2}.
\end{equation}
This is a second order linear nonhomogeneous Cauchy-Euler equation whose general solution can be found by using the solution structure of linear DEs and the method of variation of parameters to be
\begin{eqnarray}
\rho(r)=&&C_1+ (C_2-2C_3)\ln r+C_3\ln (1+r^2)+C_4\ln r \ln (1+r^2)\\\notag&& -2C_4\int\frac{\ln(1+r^2)}{r}\,dr+\frac{k^2}{4}(\ln r)^2,
\end{eqnarray}
where $C_i, i=1, 2, 3, 4$ are constants. Thus, we obtain the first statement of the theorem. The second statement of the theorem follows from the particular solution corresponding to $C_1=C_4=0, C_3=1, C_2=2$.
\end{proof}
Similarly, we have 
\begin{proposition}\label{ps}
The rotationally symmetric map $\varphi:(S^2\setminus\{N,\;S\}, dr^2+\sin^2 r{\rm d}\theta^2)\longrightarrow
(N^2, {\rm d}\rho^2+\rho{\rm d}\phi^2)$ with $\varphi(r,\theta)=(\rho(r), k\theta)$ is an $f$-biharmonic map for $f(r)=\frac{\sin^2r}{[1+(\ln\tan \frac{r}{2})^2]^2}$ if and only if
\begin{eqnarray}\notag
\rho(r)=&&C_4 +C_3\ln \tan \frac{r}{2} +(\frac{C_1}{2} +\frac{k^2}{4})(\ln \tan \frac{r}{2})^2 +\frac{C_2}{6}(\ln \tan \frac{r}{2})^3+\frac{C_1}{6}(\ln \tan \frac{r}{2})^4\\\label{T9} &&+\frac{C_2}{10}(\ln \tan \frac{r}{2})^5+\frac{C_1}{30}(\ln \tan \frac{r}{2})^6+\frac{C_2}{42}(\ln \tan \frac{r}{2})^7,
\end{eqnarray}
where $C_i, i=1, 2, 3, 4$ are constants. In particular, the rotationally symmetric map $\varphi:(S^2\setminus\{N,\;S\}, dr^2+\sin^2 r{\rm d}\theta^2)\longrightarrow
(N^2, {\rm d}\rho^2+\rho{\rm d}\phi^2)$, $\varphi(r,\theta)=(\rho(r), k\theta)$ with $\rho(r)=\frac{1}{30}(\ln \tan \frac{r}{2})^6+\frac{1}{6}(\ln \tan \frac{r}{2})^4+(\frac{1}{2}+\frac{1}{4}k^2)(\ln \tan \frac{r}{2})^2 +1$, is a proper $f$-biharmonic map for $f(r)=\frac{\sin^2r}{[1+(\ln\tan \frac{r}{2})^2]^2}$.
\end{proposition}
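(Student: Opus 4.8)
The plan is to run the general reduction of Corollary~\ref{f-bih} in the special case $\sigma(r)=\sin r$, $\lambda^2(\rho)=\rho$, and then solve the resulting ordinary differential system by two elementary integrations, much as in the Cauchy--Euler computation above. Here $\lambda\lambda'(\rho)=\tfrac12$ and, crucially, $(\lambda\lambda'(\rho))'(\rho)=0$, so by Equation~(\ref{Sl01}) the $f$-biharmonicity of $\varphi$ for a weight $f=f(r)$ is equivalent to the system
\[
y''+\cot r\,y'=0,\qquad y=fx,\qquad x=\rho''+\cot r\,\rho'-\frac{k^2}{2\sin^2 r}.
\]
Multiplying the first equation by $\sin r$ turns it into $(\sin r\,y')'=0$, whose general solution is $y=C_1+C_2\ln\tan\tfrac r2$; no nonvanishing hypothesis on $x$ is needed, so the harmonic case $C_1=C_2=0$ is covered as well.

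Next I would feed the prescribed weight $f(r)=\sin^2 r\,[1+(\ln\tan\tfrac r2)^2]^{-2}$ into $x=y/f$, obtaining
\[
x=\frac{\bigl(C_1+C_2\ln\tan\tfrac r2\bigr)\bigl[1+(\ln\tan\tfrac r2)^2\bigr]^2}{\sin^2 r}.
\]
This choice of $f$ is made precisely so that $\sin^2 r\cdot x$ becomes a polynomial in $t:=\ln\tan\tfrac r2$, which is what renders the last equation explicitly integrable. Using $dt=dr/\sin r$, i.e. $\tfrac{d}{dr}=\tfrac1{\sin r}\tfrac{d}{dt}$ and $\sin r\,\rho'=\rho_t$, together with the identity $\rho''+\cot r\,\rho'=\tfrac1{\sin r}(\sin r\,\rho')'=\tfrac1{\sin^2 r}\rho_{tt}$, the third equation becomes
\[
\rho_{tt}=\sin^2 r\Bigl(x+\frac{k^2}{2\sin^2 r}\Bigr)=\bigl(C_1+C_2 t\bigr)(1+t^2)^2+\frac{k^2}{2}.
\]
Expanding $(C_1+C_2t)(1+t^2)^2$ and integrating twice in $t$, then collecting the powers of $t=\ln\tan\tfrac r2$, yields precisely~(\ref{T9}), the two further constants of integration being $C_3$ and $C_4$; conversely, for any $\rho$ of this form one computes $fx=C_1+C_2t$, which solves the homogeneous equation, so the equivalence holds in both directions. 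Taking $C_1=C_4=1$, $C_2=C_3=0$ gives the map of the second assertion (and there $\rho\ge1>0$, so $\lambda^2=\rho$ is indeed a metric).

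Finally, to see this last map is a \emph{proper} $f$-biharmonic map one checks it is neither harmonic nor biharmonic. It is not harmonic since $\tau(\varphi)=x\,\partial/\partial\rho$ with $x=\frac{(1+t^2)^2}{\sin^2 r}>0$. It is not biharmonic with respect to $g$ either: by the specialization of~(\ref{stco1}) with $(\lambda\lambda'(\rho))'(\rho)=0$, biharmonicity of $\varphi$ would force $x''+\cot r\,x'=0$, i.e. $x$ affine in $\ln\tan\tfrac r2$; but $\tan\tfrac r2=e^{t}$ gives $\sin^2 r=(\cosh t)^{-2}$, so $x=(1+t^2)^2\cosh^2 t$, which is manifestly not affine in $t$. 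The main labor I anticipate is the double integration producing the degree-seven polynomial and the bookkeeping in the change of variable $t=\ln\tan\tfrac r2$; there is no conceptual obstacle, since Corollary~\ref{f-bih} collapses everything to a single second order linear ODE whose homogeneous part is trivialized by that substitution.
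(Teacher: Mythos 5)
Your argument is correct and, after the substitution $t=\ln\tan\frac r2$, it lands on exactly the same pair of integrations as the paper, but it gets there by a more direct route. The paper first uses Theorem \ref{BFB} to convert the question into biharmonicity of $\varphi$ from the Riemann sphere $(S^2\setminus\{N,S\},\ \frac{[1+(\ln\tan\frac r2)^2]^2}{\sin^2 r}(dr^2+\sin^2 r\,d\theta^2))$, identifies this Riemann sphere isometrically with the model $(\mathbb{R}^2\setminus\{0\},\ (1+t^2)^2(dt^2+d\theta^2))$ via $t=\ln\tan\frac r2$, and only then applies Lemma \ref{sl} with $\sigma\equiv 1$ and $f(t)=(1+t^2)^{-2}$, obtaining $y''=0$, $y=(1+t^2)^{-2}x$, $x=\rho_{tt}-\frac{k^2}{2}$. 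You instead apply Corollary \ref{f-bih} (Equation (\ref{Sl01})) directly on the round sphere with $\sigma=\sin r$, $\lambda\lambda'=\frac12$, $(\lambda\lambda')'=0$, and make the change of variable at the ODE level; the resulting equation $\rho_{tt}=(C_1+C_2t)(1+t^2)^2+\frac{k^2}{2}$ and the double integration yielding the degree-seven polynomial (\ref{T9}) are identical to the paper's. Your route is more economical (no isometric reformulation, no second appeal to Theorem \ref{BFB}), and your explicit properness check for $C_1=C_4=1$, $C_2=C_3=0$ --- namely $x=(1+t^2)^2\cosh^2 t>0$ so $\varphi$ is not harmonic, and $x$ not affine in $t$ so $\varphi$ is not biharmonic with respect to $g_0$ --- is a genuine addition, since the paper asserts properness without spelling this out. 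What the paper's detour buys is the geometric explanation of where the weight $f$ comes from: it is precisely the conformal factor matching the round metric to the model $(1+t^2)^2(dt^2+d\theta^2)$ on the punctured plane.
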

\begin{proof}
We know that the rotationally symmetric map $\varphi:(S^2\setminus\{N,\;S\}, dr^2+\sin^2 r{\rm d}\theta^2)\longrightarrow
(N^2, {\rm d}\rho^2+\rho{\rm d}\phi^2)$ with $\varphi(r,\theta)=(\rho(r), k\theta)$ is a proper $f$-biharmonic map for $f(r)=\frac{\sin^2r}{[1+(\ln\tan \frac{r}{2})^2]^2}$ if and only if the map \\$\varphi:\left(S^2\setminus\{N,\;S\}, \frac{[1+(\ln\tan \frac{r}{2})^2]^2}{\sin^2r}(dr^2+\sin^2 r{\rm d}\theta^2)\right)\longrightarrow
(N^2, {\rm d}\rho^2+\rho{\rm d}\phi^2)$ with $\varphi(r,\theta)=(\rho(r), k\theta)$ is a biharmonic map. On the other hand, a straightforward computation shows that the Riemann sphere $\left((S^2\setminus\{N,\;S\}, \frac{[1+(\ln\tan \frac{r}{2})^2]^2}{\sin^2r}(dr^2+\sin^2 r{\rm d}\theta^2)\right)$ is isometric to the model $(\mathbb{R}^2\setminus\{0\}, (1+t^2)^2({\rm d}t^2+{\rm d}\theta^2))$ via the isometry $\begin{cases} t=\ln \tan \frac{r}{2} \\\theta=\theta\end{cases}.$ It follows that the map $\varphi:(S^2\setminus\{N,\;S\}, dr^2+\sin^2 r{\rm d}\theta^2)\longrightarrow
(N^2, h={\rm d}\rho^2+\rho{\rm d}\phi^2)$ with $\varphi(r,\theta)=(\rho(r), k\theta)$ is a proper $f$-biharmonic map for $f(r)=\frac{\sin^2r}{[1+(\ln\tan \frac{r}{2})^2]^2}$ if and only if the map $\varphi: (\mathbb{R}^2\setminus\{0\}, (1+t^2)^2({\rm d}t^2+{\rm d}\theta^2))\longrightarrow
(N^2, {\rm d}\rho^2+\rho{\rm d}\phi^2)$ with $\varphi(t,\theta)=(\rho(t), k\theta)$ is a biharmonic map, which is equivalent to stating that the map $\varphi: (\mathbb{R}^2\setminus\{0\}, {\rm d}t^2+{\rm d}\theta^2)\longrightarrow
(N^2, {\rm d}\rho^2+\rho{\rm d}\phi^2)$ with $\varphi(t,\theta)=(\rho(t), k\theta)$ is an $f$-biharmonic map with $f(t)=(1+t^2)^{-2}$.\\

Now applying Lemma \ref{sl} to the map $\varphi:(\mathbb{R}^2\setminus\{0\}, f^{-1}(t)({\rm d}t^2+{\rm d}\theta^2)\longrightarrow
(N^2, {\rm d}\rho^2+\lambda^2(\rho){\rm d}\phi^2)$, $\varphi(t,\theta)=(\rho(t), k\theta)$, with $\sigma=1>0, \lambda^2(\rho)=\rho, f(t)=(1+t^2)^{-2}$, we conclude that $\varphi$ is biharmonic if and only if
\begin{equation}
\begin{cases}\label{EC}
y''=0,\\
y=\frac{1}{(1+t^2)^2}x,\\
x=\rho''-\frac{k^2}{2}.
\end{cases}
\end{equation}
Solving the first equation in (\ref{EC}) we have $y=C_1+C_2t$ from which, together with the second equation of (\ref{EC}), we obtain that $x=(C_1+C_2 t )(1+t^2)^{2}$. Substituting this into the third equation of the (\ref{EC}) and integrating the resulting equation we have
\begin{equation}\label{90}
\rho(t)=C_4 +C_3t +(\frac{C_1}{2} +\frac{k^2}{4})t^2 +\frac{C_2}{6}t^3+\frac{C_1}{6}t^4+\frac{C_2}{10}t^5+\frac{C_1}{30}t^6+\frac{C_2}{42}t^7,
\end{equation}
where $C_i, i=1, 2, 3, 4$ are constants. Substituting $t=\ln \tan\frac{r}{2}$ into (\ref{90}) we obtain the first statement of the proposition. The second statement of the proposition follows from the particular solution corresponding to $C_1=C_4=1, C_2= C_3=0$.
\end{proof}

\section{Biharmonic and $f$-biharmonic maps from a round sphere $S^2$}
So far, no example of proper biharmonic map or $f$-biharmonic map from $S^2$ has been found except the biharmonic isometric immersion $\varphi: S^2\longrightarrow S^3$ with $\varphi (x)=(\frac{x}{\sqrt{2}}, \frac{1}{\sqrt{2}})$. In this section, we will show that in several special cases, $f$-biharmonic equation for rotationally symmetric maps can be solved to produce examples of proper $f$-biharmonic maps from a round sphere or proper biharmonic maps from Riemann spheres with some singular points.\\

As we mentioned in Section 2, the globally defined smooth map
$\varphi: S^2\longrightarrow S^2$ obtained from the restriction of
the polynomial map $F:\mathbb{R}^{3}\longrightarrow \mathbb{R}^{3}$
with $F(x, y, z)=(x^{2}-y^{2}-z^{2}, \;2xy,\; 2xz)$ is a rotationally symmetric map
\begin{eqnarray}\notag
&&\varphi :((0,\pi/2)\times S^{1}, {\rm d}\,r^{2}+\sin ^{2} r\,{\rm d}\,\theta^{2})\longrightarrow ((0,\pi)\times S^{1}, {\rm d}\,\rho^{2}+\sin ^{2} \rho\,{\rm d}\,\phi^{2}),\\ \notag
&&\varphi (r,\theta)=(2r, \theta).
\end{eqnarray}
Ou next proposition shows that the $f$-biharmonicity of this map reduces to a Riccati equation satisfied by the derivative of $\beta= \frac{1}{2} \ln f$.
\begin{proposition}\label{tcll}
The rotationally symmetric map $\varphi:(S^2, dr^2+\sin^2 rd\theta^2)\longrightarrow
(S^2, d\rho^2+\sin^2 \rho d\phi^2)$ with $\varphi(r,\theta)=(2r,\theta)$
is an $f$-biharmonic map if and only if $\beta=\frac{1}{2}\ln f(r)$ solves the following equation
\begin{equation}\label{b1}
\beta''+(3\cot r-2\tan r)\beta'+2 \beta'^2+1-4\sin^2r=0,
\end{equation}
which is a Riccati equation in $\beta'$.
\end{proposition}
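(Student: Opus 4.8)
The plan is to specialize the general reduction of Corollary \ref{f-bih} to the present data and then absorb the tension coefficient into an exponential substitution, turning the resulting linear second‑order ODE into a Riccati equation. Here the domain and target profiles are $\sigma(r)=\sin r$ and $\lambda(\rho)=\sin\rho$, the map has $\rho(r)=2r$ and $k=1$, so by Corollary \ref{f-bih} (the case $f=f(r)$) the map is $f$-biharmonic if and only if the system (\ref{Sl01}) holds. I would first evaluate its data: the third equation gives
\[
x=\rho''+\frac{\sigma'}{\sigma}\rho'-\frac{k^2\lambda\lambda'(\rho)}{\sigma^2}=2\cot r-\frac{\sin 2r\cos 2r}{\sin^2 r},
\]
which, using $\sin 2r=2\sin r\cos r$ and $1-\cos 2r=2\sin^2 r$, collapses to $x=2\sin 2r$; this is nonzero on $(0,\pi/2)$, so $\varphi$ is not harmonic and division by $x$ is legitimate. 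Likewise $(\lambda\lambda'(\rho))'(\rho)=\cos 2\rho=\cos 4r$, so the first equation of (\ref{Sl01}) becomes the linear ODE
\[
y''+\cot r\,y'-\frac{\cos 4r}{\sin^2 r}\,y=0,\qquad y=xf=2\sin 2r\cdot f .
\]

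Next I would make the substitution $f=e^{2\beta}$ (legitimate since $f>0$, and bijective onto real $\beta$, with $\beta=\tfrac12\ln f$), so $y=u\,e^{2\beta}$ with $u=2\sin 2r$. Differentiating, $y'=e^{2\beta}(u'+2\beta'u)$ and $y''=e^{2\beta}\bigl(u''+4\beta'u'+(4\beta'^2+2\beta'')u\bigr)$; substituting into the linear ODE and dividing by the nonvanishing factor $e^{2\beta}u$ gives
\[
2\beta''+4\beta'^2+\Bigl(4\tfrac{u'}{u}+2\cot r\Bigr)\beta'+\Bigl(\tfrac{u''}{u}+\cot r\,\tfrac{u'}{u}-\tfrac{\cos 4r}{\sin^2 r}\Bigr)=0 .
\]
For $u=2\sin 2r$ one has $u'/u=2\cot 2r=\cot r-\tan r$ and $u''/u=-4$, so the $\beta'$–coefficient is $6\cot r-4\tan r$, and with $\cot^2 r=\csc^2 r-1$ and $\cos 4r=1-8\sin^2 r+8\sin^4 r$ the $\beta$‑free term becomes $-4+(\cot^2 r-1)-(\csc^2 r-8+8\sin^2 r)=2-8\sin^2 r$. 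Dividing through by $2$ yields exactly (\ref{b1}). Reading the chain of equivalences backwards (a solution $\beta$ of (\ref{b1}) produces $f=e^{2\beta}>0$ and $y=2\sin 2r\,e^{2\beta}$ solving the linear ODE, hence an $f$-biharmonic $\varphi$) gives both directions, and since (\ref{b1}) is quadratic in $\beta'$ it is a Riccati equation in $w=\beta'$.

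There is no conceptual obstacle here; the only delicate point is the trigonometric bookkeeping in the last step, where the $\csc^2 r$ (i.e.\ $1/\sin^2 r$) contributions coming from $\cot^2 r$ and from $\cos 4r/\sin^2 r$ must cancel so that the constant‑in‑$\beta$ term reduces to the polynomial $2-8\sin^2 r$ rather than something with poles at $r=0,\pi/2$. A cleaner‑looking alternative is to first pass to the variable $t=\ln\tan\frac r2$ as in Remark \ref{reyj}, which turns the linear ODE into $y''(t)=\cos 4r\cdot y(t)$ before substituting $f=e^{2\beta}$; this relocates but does not eliminate the same elementary computation, so I would keep the argument in the variable $r$ to land directly on the stated form of (\ref{b1}).
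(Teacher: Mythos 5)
Your proposal is correct and follows essentially the same route as the paper: specialize Corollary \ref{f-bih} (with $\sigma=\sin r$, $\lambda=\sin\rho$, $\rho=2r$, $k=1$) to get $x=2\sin 2r$ and the linear ODE $y''+\cot r\,y'-\frac{\cos 4r}{\sin^2 r}y=0$ with $y=fx$, then substitute $\beta=\frac{1}{2}\ln f$ to obtain (\ref{b1}). The only difference is that you carry out explicitly the trigonometric verification that the paper compresses into ``we can check,'' and your computation (including the cancellation of the $1/\sin^2 r$ terms yielding $2-8\sin^2 r$) is accurate.
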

\begin{proof}
By Corollary \ref{f-bih}, $\varphi:(S^2, dr^2+\sin^2 rd\theta^2)\longrightarrow
(S^2, d\rho^2+\sin^2 \rho d\phi^2)$ with $\varphi(r,\theta)=(2r,\theta)$
is an $f$-biharmonic map if and only if 
\begin{equation}\label{2r}
\begin{cases}
y''+ \frac{\cos r}{\sin r} y'-\frac{\cos4r}{\sin^2r}y=0,\\
y=fx,\\
x=2\sin2r.
\end{cases}
\end{equation}
By setting $\beta=\frac{1}{2}\ln f$ we can check that Equation (\ref{2r}) is equivalent to
\begin{equation}\label{btcll2}
\beta''+(3\cot r-2\tan r)\beta'+2 \beta'^2+1-4\sin^2r=0.
\end{equation}
Thus, the proposition follows.
\end{proof}
\begin{remark}
We know from \cite{WOY} that the rotationally symmetric map
$\varphi:(S^2, dr^2+\sin^2 r d\theta^2)\longrightarrow
(S^2, d\rho^2+\sin^2 \rho d\phi^2)$ with
$\varphi(r,\theta)=(2r,\theta)$ is neither harmonic nor biharmonic. However, by Theorem \ref{tcll} and the existence of solution of Riccati equation, we know that there are functions $f$ locally defined on $S^2$ so that the map
$\varphi:(S^2\supset U, f^{-1}(dr^2+\sin^2 r d\theta^2))\longrightarrow
(S^2, d\rho^2+\sin^2 \rho d\phi^2)$ with
$\varphi(r,\theta)=(2r,\theta)$ is proper biharmonic map. In other words, the existence of solutions of Riccati equation and our Theorem \ref{tcll} provides many locally defined $f$-biharmonic maps between standard $2$-spheres. 
\end{remark}
Our next theorem gives an $f$-biharmonic map defined on $S^2\setminus\{N, S\}$ which can be extended continuously to $S^2$.
\begin{theorem}\label{glob}
For $f(r)
= \frac{4(
1+\tan^2\frac{r}{2}) (1+2\tan^2\frac{r}{2})^{\frac{3}{2}}}{
3\tan^4\frac{r}{2}+9\tan^2\frac{r}{2}+6+\cot^2\frac{r}{2}}$, the map $\varphi:(S^2\setminus\{N, S\}, f^{-1}(r)(dr^2+\sin^2 r d\theta^2))\longrightarrow
(S^2, d\rho^2+\sin^2 \rho d\phi^2)$ with $\varphi(r,\theta)=(\frac{1}{2}\arccos(\sin^{2}\frac{r}{2}), 2\theta)$ is proper biharmonic. In other words, the map $\varphi:(S^2, dr^2+\sin^2 r d\theta^2)\longrightarrow
(S^2, d\rho^2+\sin^2 \rho d\phi^2)$ with $\varphi(r,\theta)=(\frac{1}{2}\arccos(\sin^{2}\frac{r}{2}), 2\theta)$ between the standard spheres is a proper $f$-biharmonic map for $f(r)
= \frac{4(
1+\tan^2\frac{r}{2}) (1+2\tan^2\frac{r}{2})^{\frac{3}{2}}}{
3\tan^4\frac{r}{2}+9\tan^2\frac{r}{2}+6+\cot^2\frac{r}{2}}$.
\end{theorem}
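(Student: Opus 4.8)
The plan is to reduce the claim, via Lemma \ref{sl}, to checking an explicit second-order ODE system, and then to carry out that check after a conformal change of the radial coordinate. Specializing Lemma \ref{sl} to $\sigma(r)=\sin r$, $\lambda(\rho)=\sin\rho$ and $k=2$ (so that $k^2\lambda\lambda'(\rho)=2\sin 2\rho$ and $k^2(\lambda\lambda'(\rho))'(\rho)=4\cos 2\rho$), the map $\varphi:(S^2\setminus\{N,S\},f^{-1}(r)(dr^2+\sin^2r\,d\theta^2))\to(S^2,d\rho^2+\sin^2\rho\,d\phi^2)$, $\varphi(r,\theta)=(\rho(r),2\theta)$, is biharmonic if and only if
\[
y''+\cot r\,y'-\frac{4\cos 2\rho}{\sin^2 r}\,y=0,\qquad y=fx,\qquad x=\rho''+\cot r\,\rho'-\frac{2\sin 2\rho}{\sin^2 r}.
\]
So it is enough to verify these three relations for the prescribed $\rho$ and $f$ and to note that the resulting $y$ vanishes nowhere, whence $\tau(\varphi,f^{-1}g)=y\,\partial_\rho\neq0$ and the map is non-harmonic, hence proper biharmonic; the equivalent assertion for the $f$-biharmonic map from $(S^2,g_0)$ then follows from Theorem \ref{BFB}.

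Next I would pass to the variable $t=\int\frac{dr}{\sin r}=\ln\tan\frac r2$. As in Remark \ref{reyj} (and by the analogous computation $\rho''+\cot r\,\rho'=\sin^{-2}r\,\ddot\rho$, dots denoting $d/dt$), the system above is equivalent to
\[
\ddot y-4\cos 2\rho\,y=0,\qquad y=fx,\qquad x=\sin^{-2}r\,\bigl(\ddot\rho-2\sin 2\rho\bigr).
\]
The prescribed map satisfies $\rho(r)=\tfrac12\arccos(\sin^2\tfrac r2)$, i.e. the implicit relation $\cos 2\rho=\sin^2\tfrac r2=\frac{e^{2t}}{1+e^{2t}}$, so that $\sin 2\rho=\frac{\sqrt{1+2e^{2t}}}{1+e^{2t}}$. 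Writing $p=e^{2t}=\tan^2\frac r2$ and differentiating the implicit relation twice gives $\dot\rho=-\frac{p}{(1+p)\sqrt{1+2p}}$ and then an explicit formula for $\ddot\rho$; combined with $\sin^2 r=\frac{4p}{(1+p)^2}$, this yields
\[
x=-\frac{3p^3+9p^2+6p+1}{2p\,(1+2p)^{3/2}},
\]
which is nonzero for all $p>0$, so $\tau(\varphi)\neq0$.

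The decisive point is then an algebraic coincidence: the prescribed $f$, after clearing $\cot^2\frac r2$ from its denominator, equals $f=\frac{4p(1+p)(1+2p)^{3/2}}{3p^3+9p^2+6p+1}$, whose denominator is exactly the numerator appearing in $x$ above. Hence $y=fx$ collapses to $y=-2(1+p)=-2(1+e^{2t})$, and the sole remaining relation to verify is the first ODE: $\ddot y=-8e^{2t}$ while $4\cos 2\rho\,y=4\cdot\frac{e^{2t}}{1+e^{2t}}\cdot(-2)(1+e^{2t})=-8e^{2t}$, so $\ddot y-4\cos 2\rho\,y=0$. Since $f>0$ for $p>0$ and $y$ vanishes nowhere, $\varphi:(S^2\setminus\{N,S\},f^{-1}g)\to(S^2,h)$ is proper biharmonic. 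Finally, Equation (\ref{Sl01}) with $f\equiv1$ shows $\varphi$ is biharmonic with respect to $g_0$ only if $\ddot x-4\cos2\rho\,x=0$; but $x$ is algebraic in $e^t$ while the second independent solution of that equation, obtained from $-2(1+e^{2t})$ by reduction of order, involves a logarithm, so $\varphi$ is not biharmonic with respect to $g_0$ and the associated $f$-biharmonic map is proper.

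I expect the main obstacle to be purely computational: extracting $\ddot\rho$ from the implicit relation $\cos 2\rho=\sin^2\frac r2$ and simplifying the intermediate expression, which carries $(1+2p)^{3/2}$ in several terms, down to the clean rational form for $x$. This step is elementary but easy to get wrong, and the whole argument hinges on the cubic $3p^3+9p^2+6p+1$ produced there matching the denominator of the prescribed $f$; once that cancellation is in hand, verification of the governing equation is immediate.
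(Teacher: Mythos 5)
Your proposal is correct and follows essentially the same route as the paper: it reduces the claim to the ODE system of Lemma \ref{sl} with $\sigma=\sin r$, $\lambda=\sin\rho$, $k=2$, passes to $t=\ln\tan\frac r2$, and exploits that $y=fx=-2(1+e^{2t})$ solves $\ddot y-\frac{4e^{2t}}{1+e^{2t}}\,y=0$, with your value $x=-\frac{3p^3+9p^2+6p+1}{2p(1+2p)^{3/2}}$ matching the paper's expression in (\ref{gl1}). The only difference is one of direction -- the paper derives $f$ by solving the ODE via reduction of order from the particular solution $1+e^{2t}$, whereas you verify the prescribed $f$ by direct substitution (and additionally justify non-biharmonicity with respect to $g_0$) -- which does not change the substance of the argument.
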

\begin{proof}
By Lemma \ref{sl}, the map $\varphi:( S^2,\bar{g}=f^{-1}(r)(dr^2+\sin^2 r d\theta^2))\longrightarrow
(S^2, d\rho^2+\sin^2 \rho d\phi^2)$ with $\varphi(r,\theta)=(\frac{1}{2}\arccos(\sin^{2}\frac{r}{2}), 2\theta)$, $ \sigma=\sin r$, and $\lambda(\rho)=\sin\rho$ is biharmonic if and only if
\begin{equation}\label{gl1}
\begin{cases}
y''+\cot ry'-\frac{4\sin^{2}\frac{r}{2}}{\sin^2r}y=0,\\
y=f x,\\
x=-\frac{6+3\tan^4\frac{r}{2}+9\tan^2\frac{r}{2}+\cot^2\frac{r}{2}}{2(1+2\tan^2\frac{r}{2})^{\frac{3}{2}}}.
\end{cases}
\end{equation}
Using substitution $t=\ln|\tan\frac{r}{2}|$, we can re-write Equation (\ref{gl1}) as
\begin{equation}\label{gl2}
\begin{cases}
\frac{d^2y}{d t^2}-4 \frac{e^{2t}}{1+e^{2t}}y(t)=0,\\
y(t)=f(t)x(t),\\
x(t)= -\frac{3e^{4t}+9e^{2t}+6+e^{-2t}}{2(1+2e^{2t})^{\frac{3}{2}}}.
\end{cases}
\end{equation}

It is easy to check that $y_1=1+e^{2t}$ is a solution of the 2nd order linear DE (\ref{gl2}). Using the method of reduction of order we have the general solution of the first equation of (\ref{gl2}) given by
\begin{equation}\label{gl3}
y(t)=(1+e^{2t})\left\{C_1\left(\frac{1}{2(1+e^{2t})}+\frac{1}{2}\ln\frac{e^{2t}}{1+e^{2t}}\right)+C_2\right\},
\end{equation}
where $C_1, C_2$ are constants.\\

By choosing $C_1=0, C_2=-2$ we have a special solution $y_0(t)=-2(1+e^{2t})$. It follows that we can solve for $f$ from the second equation of (\ref{gl2}) to have
\begin{eqnarray}\notag
f(t)=y_0(t)/x(t)= \frac{4(1+e^{2t})(1+2e^{2t})^{\frac{3}{2}}}{(3e^{4t}+9e^{2t}+6+e^{-2t})}.\\\notag
\end{eqnarray}
Using the substitution $t=\ln|\tan\frac{r}{2}|$ we have
\begin{eqnarray}\notag
f(r)
&=& \frac{4(
1+\tan^2\frac{r}{2}) (1+2\tan^2\frac{r}{2})^{\frac{3}{2}}}{
3\tan^4\frac{r}{2}+9\tan^2\frac{r}{2}+6+\cot^2\frac{r}{2}},
\end{eqnarray}
and that the map $\varphi:(S^2,f^{-1}(r)(dr^2+\sin^2 r d\theta^2))\longrightarrow
(S^2, d\rho^2+\sin^2 \rho d\phi^2)$ defined by $\varphi(r,\theta)=(\frac{1}{2}\arccos(\sin^{2}\frac{r}{2}), 2\theta)$ is proper biharmonic.
Thus, the theorem follows.
\end{proof}
\begin{remark}
We can check that
\begin{equation}
\begin{cases}
\lim\limits_{r\longrightarrow 0}\rho(r)=\lim\limits_{r\longrightarrow 0}\frac{1}{2}\arccos(\sin^{2}\frac{r}{2})=\frac{\pi}{4}, \\ \lim\limits_{r\longrightarrow \pi}\rho(r)=\lim\limits_{r\longrightarrow \pi}\frac{1}{2}\arccos(\sin^{2}\frac{r}{2})=0,\\
\lim\limits_{r\longrightarrow 0}f^{-1}(r)=\lim\limits_{r\longrightarrow 0}\frac{
3\tan^4\frac{r}{2}+9\tan^2\frac{r}{2}+6+\cot^2\frac{r}{2}}{4(
1+\tan^2\frac{r}{2}) (1+2\tan^2\frac{r}{2})^{\frac{3}{2}}}=+\infty,\\
\lim\limits_{r\longrightarrow \pi}f^{-1}(r)=\lim\limits_{r\longrightarrow \pi}\frac{
3\tan^4\frac{r}{2}+9\tan^2\frac{r}{2}+6+\cot^2\frac{r}{2}}{4(
1+\tan^2\frac{r}{2}) (1+2\tan^2\frac{r}{2})^{\frac{3}{2}}}=0.
\end{cases}
\end{equation}

It follows that the function $\rho: (0, \pi)\longrightarrow (0, \pi)$ can be continuously extended over $r=0, \pi$, so the map $\varphi$ can be continuously extended to the whole sphere $S^2$. This can also be seen from the expression of the map: $\varphi: (\sin r \cos \theta, \sin r \sin \theta, \cos r) \mapsto \left(\frac{\cos \frac{r}{2}}{\sqrt{2}} \cos 2\theta, \frac{\cos \frac{r}{2}}{\sqrt{2}} \sin 2\theta, \frac{\sqrt{1+\sin^2 \frac{r}{2}}}{\sqrt{2}} \right)$. However, the Riemann sphere $(S^2, f^{-1}g_0)$ ($g_0$ is the standard metric on $S^2$) still has singular points at two poles since the function $f^{-1}$ does not allow a continuously extension over $r=0$ and the continuous extension of $f^{-1}$ over $r=\pi$ has zero value at $r=\pi$. So, the $f$-biharmonic map $\varphi:(S^2\setminus\{N, S\}, (dr^2+\sin^2 r d\theta^2))\longrightarrow
(S^2, d\rho^2+\sin^2 \rho d\phi^2)$ provided by Theorem \ref{glob} can be continuously extended over the north and the south poles. 
\end{remark}
\begin{lemma}\label{lfpyj}
For nonzero constants $A$ and $k$, the rotationally symmetric map $\varphi:( S^2, dr^2+\sin^2 r d\theta^2)\longrightarrow
( S^2, d\rho^2+\sin^2 \rho d\phi^2)$ with $\varphi(r,\theta)=(Ar, k\theta)$ and $\tau(\varphi)\ne 0$ is an $f$-biharmonic map if and only if 
$f(r, \theta)=\frac{y(\ln|\tan \frac{r}{2}|)}{A\cot r-\frac{k^2\sin(2Ar)}{2\sin^2r}}$, where $y(t)$ is a solution of the following ODE
\begin{equation}\label{lfpyj0}
\frac{d^2y}{d t^2}-k^2\cos(4A\arctan e^t)y(t)=0.
\end{equation}
\end{lemma}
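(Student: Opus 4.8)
The plan is to specialize the reduction already established in Lemma~\ref{sl} and Corollary~\ref{f-bih}, combined with the change of variables in Remark~\ref{reyj}, to the data $\sigma(r)=\sin r$, $\lambda(\rho)=\sin\rho$, $\rho(r)=Ar$. First I would apply Corollary~\ref{f-bih}: the map $\varphi(r,\theta)=(Ar,k\theta)$ between the two round $2$-spheres is $f$-biharmonic if and only if $f=f(r,\theta)$ solves the system~\eqref{Sl1}. For the present data one computes $x=\rho''+\frac{\sigma'}{\sigma}\rho'-\frac{k^2\lambda\lambda'(\rho)}{\sigma^2}=A\cot r-\frac{k^2\sin(2Ar)}{2\sin^2 r}$, using $\rho''=0$, $\sigma'/\sigma=\cot r$ and $\lambda\lambda'(\rho)=\sin\rho\cos\rho=\tfrac12\sin 2\rho$. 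The hypothesis $\tau(\varphi)\ne 0$ is exactly the condition $x\not\equiv 0$.

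Next I would argue that $f$ must be independent of $\theta$. Since $\lambda'(\rho)=\cos(Ar)$ and $\lambda=\sin(Ar)$ are not identically zero and $x\not\equiv 0$, the coefficient $k\lambda'(\rho)x/(\sigma^2\lambda)$ in the second equation of~\eqref{Sl1} is non-vanishing on an open dense set; hence $f_\theta\equiv 0$ there, and therefore on the whole domain by continuity. So we are reduced to the scalar system~\eqref{Sl01}. For our data its first equation becomes $y''+\cot r\,y'-\frac{k^2\cos(2Ar)}{\sin^2 r}y=0$ with $y=fx$, because $(\lambda\lambda'(\rho))'(\rho)=\bigl(\tfrac12\sin 2\rho\bigr)'(\rho)=\cos 2\rho=\cos(2Ar)$.

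Finally I would invoke Remark~\ref{reyj}: since $x\ne 0$, the substitution $t=\int\frac{dr}{\sin r}=\ln|\tan\frac r2|$ absorbs the first-order term and turns the equation into $\frac{d^2y}{dt^2}-k^2\cos(2Ar)\,y(t)=0$. Writing $r$ in terms of $t$ via $e^t=\tan\frac r2$, i.e.\ $r=2\arctan e^t$, gives $2Ar=4A\arctan e^t$ and hence precisely equation~\eqref{lfpyj0}; moreover $f=y/x$ together with $t=\ln|\tan\frac r2|$ yields the stated formula for $f(r,\theta)$. Every step is an equivalence, so the converse direction holds as well. The argument is essentially bookkeeping once Lemma~\ref{sl} and Remark~\ref{reyj} are available; the only mildly delicate point is the passage from $f=f(r,\theta)$ to $f=f(r)$ (justifying that the relevant coefficient vanishes only on a nowhere dense set), together with keeping the trigonometric substitution $2Ar=4A\arctan e^t$ straight. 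I do not anticipate any serious obstacle.
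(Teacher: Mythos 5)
Your proposal is correct and follows essentially the same route as the paper's own proof: reduce via Theorem \ref{BFB}/Corollary \ref{f-bih} and Lemma \ref{sl} to the system with $x=A\cot r-\frac{k^2\sin(2Ar)}{2\sin^2 r}$, conclude $f=f(r)$ from the angular equation and $\tau(\varphi)\ne 0$, and then apply the substitution $t=\ln|\tan\frac r2|$ (as in Remark \ref{reyj}) to obtain equation \eqref{lfpyj0} and $f=y/x$. Your added justification that $f_\theta\equiv 0$ (via density of the set where the coefficient is nonzero) merely fills in a step the paper states without detail.
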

\begin{proof}
Recall that the map $\varphi:(S^2, g= dr^2+\sin^2r d\theta^2)\longrightarrow
(S^2, d\rho^2+\sin^2\rho d\phi^2)$ with $\varphi(r,\theta)=(Ar, k\theta)$ is an $f$-biharmonic map if and only if
the map $\varphi:(S^2, f^{-1}g)\longrightarrow
(S^2, d\rho^2+\sin^2\rho d\phi^2)$ is biharmonic from a Riemann sphere. This, together with Lemma \ref{sl} and $\tau(\varphi)\ne 0$, implies that $f$ depends only on variable $r$, and
\begin{equation}\label{lfpyj1}
\begin{cases}
y''+\cot ry'-\frac{k^2\cos(2Ar)}{\sin^2r}y=0,\\
y=fx,\\
x=A\cot r-\frac{k^2\sin(2Ar)}{2\sin^2r}.
\end{cases}
\end{equation}
A straightforward computation shows that the substitution $t=\ln|\tan \frac{r}{2}|$ turns the first equation of (\ref{lfpyj1}) into
\begin{equation}\label{lfpyj2}
\frac{d^2y}{d t^2}-k^2\cos(4A\arctan e^t)y(t)=0.
\end{equation}
From the second equation of (\ref{lfpyj1}) we have
\begin{equation}
f=\frac{y(t)}{A\cot r-\frac{k^2\sin(2Ar)}{2\sin^2r}}=\frac{y(\ln|\tan \frac{r}{2}|)}{A\cot r-\frac{k^2\sin(2Ar)}{2\sin^2r}},
\end{equation}
where $y(t)$ is a solution of Equation (\ref{lfpyj2}).
From this we obtain the lemma.
\end{proof}
\begin{theorem}\label{kzt}
The rotationally symmetric map $\varphi:( S^2, dr^2+\sin^2r d\theta^2)\longrightarrow
( S^2, d\rho^2+\sin^2 \rho d\phi^2)$ with $\varphi(r,\theta)=(r, \sqrt{3}\theta)$ is an $f$-biharmonic map for
$f(r, \theta)=|\frac{(\tan\frac{r}{2})^{1+\sqrt{3}}(\tan^{2}\frac{r}{2}+7+4\sqrt{3})(
\tan^{2}\frac{r}{2}-2-\sqrt{3})
}{(1+\tan^{2}\frac{r}{2})^2(\tan^{2}\frac{r}{2}-1)}|$, which is smooth on $(0, \pi)$ except $r=\frac{\pi}{2}, 2\arctan \sqrt{2+\sqrt{3}\,}$.
\end{theorem}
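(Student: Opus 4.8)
The statement is an application of Lemma~\ref{lfpyj} with $A=1$ and $k=\sqrt3$. The plan is first to read off from that lemma that $\varphi(r,\theta)=(r,\sqrt3\,\theta)$ is $f$-biharmonic with respect to $g_0$ exactly when $f=y(\ln|\tan\frac r2|)/x$, where $x=\cot r-\frac{3\sin 2r}{2\sin^2 r}$ and $y(t)$ solves $\frac{d^2y}{dt^2}-3\cos(4\arctan e^{t})\,y=0$. Two simplifications make this tractable: since $e^{t}=\tan\frac r2$ on $(0,\pi)$ we have $4\arctan e^{t}=2r$, so the coefficient in the ODE is simply $\cos 2r$; and $x=\cot r-3\cot r=-2\cot r$, which in terms of $u:=\tan^{2}\frac r2$ reads $x=(u-1)/\sqrt u=(\tan^{2}\frac r2-1)/\tan\frac r2$.

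The core of the proof is the integration of the single linear ODE $\frac{d^2y}{dt^2}-3\cos(2r)\,y=0$ in $t=\ln\tan\frac r2$, equivalently (as in the proof of Lemma~\ref{lfpyj}) $y''+\cot r\,y'-\frac{3\cos 2r}{\sin^2 r}\,y=0$. Substituting $s=\cos r$ and using $\cos 2r=1-2\sin^2 r$, so that $\frac{\cos 2r}{\sin^2 r}=\csc^2 r-2$, turns this into the associated Legendre equation $(1-s^{2})Y''-2sY'+\big(6-\frac{3}{1-s^{2}}\big)Y=0$ of degree $\ell=2$ and order $\mu=\sqrt3$. The virtue of the choices $A=1$, $k=\sqrt3$ is precisely that $\ell(\ell+1)=2k^{2}$ forces the non-negative integer $\ell=2$, so the relevant solution $P_{2}^{-\sqrt3}(\cos r)$ is elementary: its defining hypergeometric series terminates (as $-\ell=-2$ is a negative integer), giving, up to a constant,
\[
y=(\tan\tfrac r2)^{\sqrt3}\,\frac{\tan^{4}\frac r2+(5+3\sqrt3)\tan^{2}\frac r2-(26+15\sqrt3)}{(1+\tan^{2}\frac r2)^{2}} .
\]
Equivalently, and this is how I would actually present it, one feeds the trial solution $y=(\tan\frac r2)^{\sqrt3}(1+\tan^{2}\frac r2)^{-2}Q(\tan^{2}\frac r2)$, with $Q$ an undetermined quadratic and the exponent $\sqrt3$ dictated by the indicial equation at $r=0$, into the ODE and solves for the coefficients of $Q$. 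The quadratic then factors over $\mathbb Z[\sqrt3]$: its roots are $2+\sqrt3$ and $-(7+4\sqrt3)$, so the numerator becomes $(\tan^{2}\frac r2+7+4\sqrt3)(\tan^{2}\frac r2-2-\sqrt3)$.

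It remains to assemble $f=y/x$ and to discuss its regularity. Dividing $y$ by $x=-2\cot r=(\tan^{2}\frac r2-1)/\tan\frac r2$ cancels nothing and yields exactly
\[
f(r)=\left|\frac{(\tan\frac r2)^{1+\sqrt3}\,(\tan^{2}\frac r2+7+4\sqrt3)(\tan^{2}\frac r2-2-\sqrt3)}{(1+\tan^{2}\frac r2)^{2}\,(\tan^{2}\frac r2-1)}\right| ,
\]
the absolute value being needed because a conformal factor $f^{-1}$ must remain positive while $y/x$ changes sign at $r=\frac\pi2$ and at $r=2\arctan\sqrt{2+\sqrt3}$; on the subintervals where $y/x<0$ one simply uses $-y$, which is again a solution of the linear ODE. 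Finally, on $(0,\pi)$ the factors $(\tan\frac r2)^{1+\sqrt3}$, $\tan^{2}\frac r2+7+4\sqrt3$ (everywhere positive) and $(1+\tan^{2}\frac r2)^{2}$ are smooth and non-vanishing, so $f$ is smooth except where the denominator factor $\tan^{2}\frac r2-1$ vanishes, that is $r=\frac\pi2$, and where the numerator factor $\tan^{2}\frac r2-2-\sqrt3$ changes sign, that is $r=2\arctan\sqrt{2+\sqrt3}$; this is the asserted exceptional set.

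I expect the only real obstacle to be the middle step: producing the closed-form solution of the second-order linear ODE and recognizing that the resulting quadratic factors so cleanly over $\mathbb Z[\sqrt3]$. Once $y$ is in hand, the appeal to Lemma~\ref{lfpyj}, the elementary identities for $x$, and the regularity discussion are all routine.
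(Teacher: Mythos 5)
Your proposal is correct and follows essentially the same route as the paper: apply Lemma \ref{lfpyj} with $A=1$, $k=\sqrt3$, find a particular solution of the resulting linear ODE via the ansatz $y=(\tan\frac r2)^{\sqrt3}(1+\tan^2\frac r2)^{-2}Q(\tan^2\frac r2)$ with $Q$ quadratic (the paper's $y(t)=e^{\sqrt3 t}(1+e^{2t})^{-2}\sum_{j=0}^2 a_j e^{2jt}$), then set $f=y/x$ and justify the absolute value and the exceptional points $r=\frac\pi2,\ 2\arctan\sqrt{2+\sqrt3}$ exactly as the paper does. Your identification of the equation as an associated Legendre equation of degree $2$ and order $\sqrt3$ is a nice extra motivation for the ansatz but does not change the argument.
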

\begin{proof}
Applying Lemma \ref{lfpyj} with $ A=1, k=\sqrt{3}$ we conclude that the map $\varphi:( S^2, dr^2+\sin^2r d\theta^2)\longrightarrow
( S^2, d\rho^2+\sin^2 \rho d\phi^2)$ with $\varphi(r,\theta)=(r, \sqrt{3}\theta)$ is an $f$-biharmonic map if and only if 
\begin{equation}\label{GD1}
f(r, \theta)=\frac{y(t)}{(1-k^2)\cot r}=-\frac{1}{2}y(\ln|\tan \frac{r}{2}|)\tan r,
\end{equation}
and $y(t)$ is a solution of the following equation
\begin{equation}\label{kzt2}
\frac{d^2y}{d t^2}-3\cos(4\arctan e^t)y(t)=0,
\end{equation}
which is equivalent to
\begin{equation}\label{kzt3}
\frac{d^2y}{d t^2}-3\frac{e^{4t}-6e^{2t}+1}{(1+e^{2t})^2}y(t)=0.
\end{equation}
To solve Equation (\ref{kzt3}), we look for a particular solution of the form $y(t)=e^{\sqrt{3}t}(1+e^{2t})^{-2}\sum\limits_{j=0}^{2}a_je^{2jt}$ with coefficients $a_j,\; j=0, 1, 2$ to be determined. Substituting $y(t)$ into (\ref{kzt3}) we have 
\begin{equation}\label{kzt7}
\begin{cases}
2(2-\sqrt{3})a_0+(\sqrt{3}+1)a_1=0,\\
2(2-\sqrt{3})a_0+2a_1+2(2+\sqrt{3})a_2=0,\\
(1-\sqrt{3})a_{1}+2(2+\sqrt{3})a_{2}=0.
\end{cases}
\end{equation}
Solving this system of linear equations we obtain $a_0=-(26+15\sqrt{3})a_2,\; a_1=(5+3\sqrt{3})a_2$. In particular, choosing $a_2=1$, we have $a_0=-26-15\sqrt{3},\; \;a_1=5+3\sqrt{3}$, which gives a particular solution of (\ref{kzt3}) as 
\begin{equation}\label{kzt9}
y(t)=\frac{e^{\sqrt{3}t}[-26-15\sqrt{3} +(5+3\sqrt{3})e^{2t}+ e^{4t} ]}{(1+e^{2t})^2}.
\end{equation}
Substituting this into (\ref{GD1}), we have
\begin{eqnarray}\label{kzt11}\notag
f(r)&=&-\frac{1}{2}y(\ln|\tan \frac{r}{2}|)\tan r\\\notag
&=&\frac{(\tan\frac{r}{2})^{1+\sqrt{3}}[-26-15\sqrt{3} +(5+3\sqrt{3})\tan^{2}\frac{r}{2}+ \tan^{4}\frac{r}{2} ]
}{(1+\tan^{2}\frac{r}{2})^2(\tan^{2}\frac{r}{2}-1)}\\\label{GD2}
&=&\frac{(\tan\frac{r}{2})^{1+\sqrt{3}}(\tan^{2}\frac{r}{2}+7+4\sqrt{3})(
\tan^{2}\frac{r}{2}-2-\sqrt{3})
}{(1+\tan^{2}\frac{r}{2})^2(\tan^{2}\frac{r}{2}-1)}.
\end{eqnarray}

It is easy to check from (\ref{GD2}) that (i) $f$ is smooth on $(0, \pi)$ except $r=\frac{\pi}{2}$, (ii) $r=2\arctan \sqrt{2+\sqrt{3}}$ is the only zero of $f$ within $( 0, \pi)$, and (iii) $ f(r)>0, \forall \, r\in (0, \frac{\pi}{2})\cup (2\arctan \sqrt{2+\sqrt{3}}, \pi)$ and $ f(r)<0, \forall \, r\in (\frac{\pi}{2}, 2\arctan \sqrt{2+\sqrt{3}})$. Note that the definition of $f$-biharmonic map (or $f^{-1}$ being a conformal factor) requires that $f>0$, so the function $f$ given by (\ref{GD2}) is inappropriate on the interval $(\frac{\pi}{2}, 2\arctan \sqrt{2+\sqrt{3}})$. However, we can use $f(r)= -\frac{(\tan\frac{r}{2})^{1+\sqrt{3}}(\tan^{2}\frac{r}{2}+7+4\sqrt{3})(
\tan^{2}\frac{r}{2}-2-\sqrt{3})
}{(1+\tan^{2}\frac{r}{2})^2(\tan^{2}\frac{r}{2}-1)}$ for the interval $(\frac{\pi}{2}, 2\arctan \sqrt{2+\sqrt{3}})$. This is possible because $f$ is a function such that $y=xf$ is a solution of a 2nd order linear differential equation and the fact that if $y=xf$ is a solution of a linear differential equation, then so is $y=-xf$. From this and the above mentioned properties of $f$ we conclude that $f(r, \theta)=|\frac{(\tan\frac{r}{2})^{1+\sqrt{3}}(\tan^{2}\frac{r}{2}+7+4\sqrt{3})(
\tan^{2}\frac{r}{2}-2-\sqrt{3})
}{(1+\tan^{2}\frac{r}{2})^2(\tan^{2}\frac{r}{2}-1)}|$ is smooth on $(0, \pi)$ except $r=\frac{\pi}{2}, 2\arctan \sqrt{2+\sqrt{3}\,}$. This completes the proof of the theorem.
\end{proof}

In a similar way, we have
\begin{theorem}\label{G3}
The rotationally symmetric map $\varphi:( S^2, dr^2+\sin^2r d\theta^2)\longrightarrow
( S^2, d\rho^2+\sin^2 \rho d\phi^2)$ with $\varphi(r,\theta)=(r, -\sqrt{3}\theta)$ is a proper $f$-biharmonic map for
$f(r, \theta)=|\frac{(\tan\frac{r}{2})^{1-\sqrt{3}}(\tan^{2}\frac{r}{2}+7-4\sqrt{3})(
\tan^{2}\frac{r}{2}-2+\sqrt{3})
}{(1+\tan^{2}\frac{r}{2})^2(\tan^{2}\frac{r}{2}-1)}|$, which is smooth on $(0, \pi)$ except $r=\frac{\pi}{2}, 2\arctan \sqrt{2-\sqrt{3}\,}$.
\end{theorem}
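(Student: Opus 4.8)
The plan is to imitate the proof of Theorem~\ref{kzt} almost verbatim, the only essential change being that here one extracts the \emph{other} indicial branch of the governing linear ODE. First I would apply Lemma~\ref{lfpyj} with $A=1$ and $k=-\sqrt{3}$. Since $x=(1-k^{2})\cot r=-2\cot r$ and the coefficient in (\ref{lfpyj0}) depends only on $k^{2}=3$, the second order linear equation to be solved is literally (\ref{kzt2}), equivalently
\[
\frac{d^{2}y}{dt^{2}}-3\,\frac{e^{4t}-6e^{2t}+1}{(1+e^{2t})^{2}}\,y(t)=0,
\]
with $t=\ln|\tan\frac{r}{2}|$, and $f$ is recovered from $f=-\tfrac12\,y(\ln|\tan\frac{r}{2}|)\tan r$. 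Thus Theorems~\ref{kzt} and \ref{G3} correspond to two different solution branches of one and the same ODE.

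Next, in order to land on the exponent $1-\sqrt{3}$ occurring in the statement (rather than $1+\sqrt{3}$ as in Theorem~\ref{kzt}), I would look for a particular solution of the form $y(t)=e^{-\sqrt{3}\,t}(1+e^{2t})^{-2}\sum_{j=0}^{2}a_{j}e^{2jt}$. Substituting this ansatz into the ODE and collecting powers of $e^{2t}$ produces a homogeneous $3\times 3$ linear system for $(a_{0},a_{1},a_{2})$ — it is precisely system~(\ref{kzt7}) with $\sqrt{3}$ replaced by $-\sqrt{3}$. Solving it and normalising $a_{2}=1$ gives $a_{0}=-26+15\sqrt{3}$, $a_{1}=5-3\sqrt{3}$, hence the particular solution
\[
y(t)=\frac{e^{-\sqrt{3}\,t}\bigl[-26+15\sqrt{3}+(5-3\sqrt{3})e^{2t}+e^{4t}\bigr]}{(1+e^{2t})^{2}}.
\]
Feeding this into $f=-\tfrac12\,y(\ln|\tan\frac{r}{2}|)\tan r$ with $e^{t}=\tan\frac{r}{2}$ yields
\[
f(r)=\frac{(\tan\frac{r}{2})^{1-\sqrt{3}}\bigl[\tan^{4}\frac{r}{2}+(5-3\sqrt{3})\tan^{2}\frac{r}{2}-26+15\sqrt{3}\bigr]}{(1+\tan^{2}\frac{r}{2})^{2}(\tan^{2}\frac{r}{2}-1)},
\]
and factoring the quadratic in $\tan^{2}\frac{r}{2}$ as $(\tan^{2}\frac{r}{2}+7-4\sqrt{3})(\tan^{2}\frac{r}{2}-2+\sqrt{3})$ reproduces the claimed formula.

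It then remains to read off smoothness, positivity and properness. The denominator $(1+\tan^{2}\frac{r}{2})^{2}(\tan^{2}\frac{r}{2}-1)$ vanishes on $(0,\pi)$ only at $r=\pi/2$, and the numerator vanishes only where $\tan^{2}\frac{r}{2}=2-\sqrt{3}$, i.e. at $r=2\arctan\sqrt{2-\sqrt{3}}$, since the factor $\tan^{2}\frac{r}{2}+7-4\sqrt{3}$ is strictly positive ($7-4\sqrt{3}>0$) and so contributes no interior zero. A sign check on the three resulting subintervals shows $f>0$ except on $(2\arctan\sqrt{2-\sqrt{3}},\,\pi/2)$, where one replaces $f$ by $-f$; this is legitimate because $y$ and $-y$ solve the same linear equation, exactly as in Theorem~\ref{kzt}, so $f(r,\theta)=\bigl|\cdots\bigr|$ is the stated conformal factor, smooth on $(0,\pi)$ away from $r=\pi/2$ and $r=2\arctan\sqrt{2-\sqrt{3}}$. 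For properness, $\tau(\varphi)=-2\cot r\,\tfrac{\partial}{\partial\rho}\neq0$ rules out harmonicity, while $\varphi$ is not biharmonic for the round metric because $x=-2\cot r$ (that is, $e^{t}-e^{-t}$ in the $t$ variable) is not a solution of the displayed ODE — being one would force $(1+e^{2t})^{2}=3(e^{4t}-6e^{2t}+1)$ identically, which is false.

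I do not anticipate a real obstacle here: the argument is a line-by-line transcription of Theorem~\ref{kzt}. The only point needing a little care is the bookkeeping of signs — $f^{-1}$ must be a genuine (positive) conformal factor — so one must pin down the single subinterval on which the raw rational expression is negative and pass to $|f|$ there, and one must verify that the phantom root $\tan^{2}\frac{r}{2}=4\sqrt{3}-7>0$... wait, $4\sqrt{3}-7<0$, so this root is non-real, which is exactly what makes $r=2\arctan\sqrt{2-\sqrt{3}}$ the unique interior zero.
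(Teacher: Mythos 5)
Your proposal is correct and follows exactly the route the paper intends: Theorem~\ref{G3} is stated with no separate proof (``In a similar way, we have''), i.e.\ one repeats the proof of Theorem~\ref{kzt} with $k=-\sqrt{3}$, observes that Equation~(\ref{lfpyj0}) depends only on $k^{2}=3$, and extracts the particular solution with exponent $e^{-\sqrt{3}t}$ instead of $e^{\sqrt{3}t}$; your coefficients $a_{0}=-26+15\sqrt{3}$, $a_{1}=5-3\sqrt{3}$, the factorization, the sign analysis forcing the absolute value on $(2\arctan\sqrt{2-\sqrt{3}},\pi/2)$, and the properness check all come out right. No gaps.
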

\begin{theorem}\label{kztt}
The rotationally symmetric map $\varphi:( S^2, dr^2+\sin^2r d\theta^2)\longrightarrow
( S^2, d\rho^2+\sin^2 \rho d\phi^2)$ with $\varphi(r,\theta)=(r, \frac{1}{2}\theta)$ is an $f$-biharmonic map if and only if
$f(r, \theta)=|\frac{\tan r\sin(\frac{\sqrt{3}}{2} r) }{\sqrt{\sin
r}}|$, which is smooth on $(0, \pi)$ except $r=\frac{\pi}{2}$.
\end{theorem}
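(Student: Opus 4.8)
The plan is to follow the same scheme as Theorems \ref{kzt} and \ref{G3}: apply Lemma \ref{lfpyj} with $A=1$ and $k=\frac12$, reduce the $f$-biharmonicity condition to an explicitly solvable second order linear ODE, and then recover $f$ from the identity $f=y/x$. Concretely, with $A=1$, $k=\frac12$ one computes
\[
x=\cot r-\frac{\sin 2r}{8\sin^2 r}=\cot r-\frac{\cos r}{4\sin r}=\frac34\cot r ,
\]
which is nonzero on $(0,\pi)\setminus\{\pi/2\}$; in particular $\tau(\varphi)\ne 0$ there, so the map is non-harmonic and Lemma \ref{lfpyj} applies. Hence $\varphi$ is $f$-biharmonic if and only if $f$ depends only on $r$ and $f=y/x=\frac43\tan r\,y$, where $y=y(r)$ solves the $r$-form of the first equation of \eqref{lfpyj1}, namely
\[
y''+\cot r\,y'-\frac{\cos 2r}{4\sin^2 r}\,y=0 .
\]

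The heart of the argument is solving this variable-coefficient equation. Because $\sqrt3$ enters the answer, the polynomial-in-$e^{t}$ ansatz used for $k=\pm\sqrt3$ in Theorems \ref{kzt}--\ref{G3} is not available, so instead I would try the normalizing substitution $y=g(r)/\sqrt{\sin r}$. A direct computation (writing $s=\sin r$ and using $s''=-s$, $\csc^2 r=1+\cot^2 r$ and $\cos 2r=2\cos^2 r-1$) shows that this substitution eliminates the first-order term and cancels every $r$-dependent coefficient, reducing the equation to the constant-coefficient form $g''+\frac34 g=0$. Therefore $g=C_1\cos\!\big(\frac{\sqrt3}{2}r\big)+C_2\sin\!\big(\frac{\sqrt3}{2}r\big)$, and
\[
y=\frac{C_1\cos\!\big(\frac{\sqrt3}{2}r\big)+C_2\sin\!\big(\frac{\sqrt3}{2}r\big)}{\sqrt{\sin r}},\qquad f=\frac43\tan r\,y=\frac43\,\frac{\tan r\,\big(C_1\cos\!\big(\frac{\sqrt3}{2}r\big)+C_2\sin\!\big(\frac{\sqrt3}{2}r\big)\big)}{\sqrt{\sin r}} .
\]

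Finally I would select the representative $C_1=0$, $C_2=1$ and discard the irrelevant positive constant $\frac43$. Since $\pm y$ are both solutions of the linear equation while $f^{-1}$ must be a genuine (positive) conformal factor, one may insert an absolute value, giving $f(r,\theta)=\big|\tan r\,\sin(\frac{\sqrt3}{2}r)/\sqrt{\sin r}\big|$. For the smoothness claim: on $(0,\pi)$ we have $\sin r>0$ and $0<\frac{\sqrt3}{2}r<\pi$, hence $\sin(\frac{\sqrt3}{2}r)>0$, so $f$ is positive and smooth on all of $(0,\pi)$ except at $r=\pi/2$, where the factor $\tan r$ has a pole. The ``if and only if'' then follows from Lemma \ref{lfpyj} together with the fact that the reduced equation has been solved completely, so that every admissible $f$ is of the above form.

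The only genuinely non-routine step — and hence the main obstacle — is recognizing that the substitution $y=g/\sqrt{\sin r}$ transforms the Jacobi-type equation into one with constant coefficients; once this is in place, the remainder is elementary integration together with the bookkeeping of the sign and of the singular point $r=\pi/2$.
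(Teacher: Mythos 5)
Your proposal is correct and follows the paper's overall framework --- apply Lemma \ref{lfpyj} with $A=1$, $k^2=\tfrac14$, note $x=\tfrac34\cot r$, solve the resulting second order linear ODE, and recover $f=y/x=\tfrac43\tan r\,y$ --- but the central ODE step is handled by a genuinely different device. The paper passes to the variable $t=\ln|\tan\tfrac r2|$ and uses the amplitude--phase ansatz $y=u(t)e^{iv(t)}$, which forces $v'=C/u^{2}$ and then requires guessing and verifying $u=\sqrt{(1+e^{2t})/e^{t}}$, $v=\tfrac{\sqrt3}{2}\arctan\frac{2e^{t}}{1-e^{2t}}$; you instead stay in the variable $r$ and apply the normalizing substitution $y=g/\sqrt{\sin r}$, which (as one checks, the $r$-dependent terms collapsing via $1-\cos^{2}r=\sin^{2}r$) reduces the equation to the constant-coefficient form $g''+\tfrac34 g=0$. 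The two routes produce the same general solution, since $\sqrt{(1+e^{2t})/e^{t}}=\sqrt{2/\sin r}$ and the paper's phase equals $\tfrac{\sqrt3}{2}r$ on $(0,\pi)$; yours is the more transparent and elementary of the two, trading the paper's guess-and-check for a standard Liouville-type normalization, while the paper's ansatz is the same one it reuses structurally elsewhere. Your treatment of the sign via $\pm y$ both being solutions, the insertion of the absolute value, and the smoothness analysis away from $r=\pi/2$ all match the paper. One small caveat: your closing claim that ``every admissible $f$ is of the above form'' is loose, since the admissible $f$ form a two-parameter family $\tfrac43\tan r\,(C_1\cos(\tfrac{\sqrt3}{2}r)+C_2\sin(\tfrac{\sqrt3}{2}r))/\sqrt{\sin r}$ and the theorem singles out one representative; but this is exactly the same imprecision present in the paper's own ``if and only if'' statement and proof, so it is not a gap relative to the paper.
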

\begin{proof}
Applying Lemma \ref{lfpyj} with $ A=1, k^2=\frac{1}{4}$ we conclude that, the maps $\varphi:( S^2, dr^2+\sin^2r d\theta^2)\longrightarrow
( S^2, d\rho^2+\sin^2 \rho d\phi^2)$ with $\varphi(r,\theta)=(r, \pm \frac{1}{2}\theta)$ is an $f$-biharmonic map if and only if 
\begin{equation}\label{kztt1}
\frac{d^2y}{d t^2}-\frac{1}{4}\frac{e^{4t}-6e^{2t}+1}{(1+e^{2t})^2}y(t)=0.
\end{equation}
To solve Equation (\ref{kztt1}) we look for particular solutions of the form 
$y(t)=u(t)e^{iv(t)}$. Substituting this into (\ref{kztt1})
we have 
\begin{equation}\label{kztt2}
\begin{cases}
(u''-(v'^2+\frac{1}{4}\frac{e^{4t}-6e^{2t}+1}{(1+e^{2t})^2})u=0,\\
v''u+2v'u'=0.
\end{cases}
\end{equation}
Solving the second equation of (\ref{kztt2}), we have
\begin{equation}\label{kztt4}
v'=C/u^2,
\end{equation}
where $C$ is a constant.\\

Taking $u=\sqrt{\frac{1+e^{2t}}{e^t}}$ and $C=\sqrt{3}$ we check that $v'=\frac{\sqrt{3}\,e^t}{1+e^{2t}}$, 
and hence $v=\frac{\sqrt{3}}{2}\arctan(\frac{2e^t}{1-e^{2t}})$. Another straightforward checking shows that 
$u=\sqrt{\frac{1+e^{2t}}{e^t}}$ and $v=\frac{\sqrt{3}}{2}\arctan(\frac{2e^t}{1-e^{2t}})$ indeed solve
the first equation of
(\ref{kztt2}). From this, we obtain the general solutions of (\ref{kztt1}) as
\begin{equation}\label{kztt6}
y(t)=\sqrt{\frac{1+e^{2t}}{e^t}}\{C_1\sin\left(\frac{\sqrt{3}}{2}\arctan(\frac{2e^t}{1-e^{2t}})\right)
+C_2\cos\left(\frac{\sqrt{3}}{2}\arctan(\frac{2e^t}{1-e^{2t}})\right),
\end{equation}
where $C_1, C_2$ are constants.\\

Taking $C_1=3/4\sqrt{2},C_2=0$, we obtain a special solution as
\begin{equation}\label{kztt7}
y(t)=3\sqrt{\frac{1+e^{2t}}{e^t}}\sin\left(\frac{\sqrt{3}}{2}\arctan(\frac{2e^t}{1-e^{2t}})\right)/4\sqrt{2}.
\end{equation}
Using Lemma \ref{lfpyj} with $ A=1, k^2=\frac{1}{4}$ we obtain 
\begin{equation}\label{kzt1}
f(r)=\frac{y(t)}{(1-k^2)\cot r}=\frac{y(\ln|\tan
\frac{r}{2}|)}{\frac{3}{4}}\tan r=\pm\frac{\tan
r \sin(\frac{\sqrt{3}}{2} r)}{\sqrt{\sin
r}}.
\end{equation}

An analysis similar to the one given in the proof of Theorem \ref{kzt} applies to complete the proof of the theorem.
\end{proof}
\begin{remark}
We remark that all maps studied in Theorems \ref{kzt}, \ref{G3} and \ref{kztt} are globally defined smooth maps between spheres. However, as $f$-biharmonic maps, each of them has some singular points. In other words, they are biharmonic maps $(S^2, f^{-1}g_0)\longrightarrow (S^2, g_0)$ from Riemann spheres with some singularities.
\end{remark}
Finally, we close the paper with the following corollary that provides examples of proper biharmonic maps from Riemann sphere into $S^3$.
\begin{corollary}\label{CLast}
let $\varphi_k: S^2\longrightarrow S^2, i=1, 2, 3, 4$ be the $f_k$-biharmonic maps defined in Theorems \ref{glob}, \ref{kzt}, \ref{G3} and \ref{kztt}, and let ${\bf I}: S^2\longrightarrow S^n$ ($n\ge 3$) be the standard totally geodesic embedding. Then, the map ${\bf I}\circ \varphi_k: (S^2, f_k^{-1}g_0)\longrightarrow S^n$ is a proper biharmonic map from Riemann sphere with some singular points for $k=1, 2, 3, 4$. None of such maps is weakly conformal.
\end{corollary}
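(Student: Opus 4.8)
The plan is to deduce everything from results already in the excerpt together with the standard behaviour of the tension and bitension fields under post-composition with a totally geodesic isometric immersion. First I would record that, by Corollary~\ref{C20} (equivalently Theorem~\ref{BFB}), on the open set where $f_k$ is smooth and positive the assertion ``$\varphi_k\colon(S^2,g_0)\to S^2$ is $f_k$-biharmonic'' is literally the assertion ``$\varphi_k\colon(S^2,f_k^{-1}g_0)\to S^2$ is biharmonic''; this is precisely what Theorems~\ref{glob}, \ref{kzt}, \ref{G3} and \ref{kztt} establish, away from the finitely many singular points of $f_k$ listed there.

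Next I would invoke the composition rule for totally geodesic maps. If $\psi\colon(M,g)\to(N,h)$ is a map and $\iota\colon(N,h)\to(P,\ell)$ is a totally geodesic isometric immersion, then $\nabla d(\iota\circ\psi)=d\iota(\nabla d\psi)$, so $\tau(\iota\circ\psi)=d\iota(\tau(\psi))$; running the same computation on the rough Laplacian and using the Gauss equation for the curvature term (the image of $\iota$ being a totally geodesic submanifold) gives $\tau_2(\iota\circ\psi)=d\iota(\tau_2(\psi))$. This is the well-known fact already used in the introduction (see, e.g., \cite{Ou1}). Taking $\psi=\varphi_k$ and $\iota={\bf I}$, it follows that $\tau_2({\bf I}\circ\varphi_k)=d{\bf I}(\tau_2(\varphi_k))=0$ on the locus of smoothness of $f_k$, so each ${\bf I}\circ\varphi_k\colon(S^2,f_k^{-1}g_0)\to S^n$ is a biharmonic map from a Riemann sphere with the same singular points as in the cited theorems.

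For properness I would use that $d{\bf I}$ is fibrewise injective, so $\tau({\bf I}\circ\varphi_k,f_k^{-1}g_0)=d{\bf I}(\tau(\varphi_k,f_k^{-1}g_0))=f_k\,d{\bf I}(\tau(\varphi_k,g_0))$ vanishes exactly where $\tau(\varphi_k,g_0)$ does; since $\tau(\varphi_k,g_0)=x\,\partial_\rho$ with $x=\rho''+\cot r\,\rho'-\tfrac{k^2\sin\rho\cos\rho}{\sin^2 r}$, this is nonzero (for Theorem~\ref{glob} by the expression for $x$ in its proof, and for $\varphi(r,\theta)=(r,k\theta)$ with $k^2\in\{3,\tfrac14\}$ because $x=(1-k^2)\cot r\not\equiv 0$; it is also implicit in the hypothesis $\tau(\varphi)\neq 0$ of Lemma~\ref{lfpyj}), so each ${\bf I}\circ\varphi_k$ is proper biharmonic. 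For the final sentence, since ${\bf I}$ is an isometric immersion, $({\bf I}\circ\varphi_k)^*g_{S^n}=\varphi_k^*g_0=\rho'(r)^2\,dr^2+k^2\sin^2\rho(r)\,d\theta^2$, and weak conformality with respect to $f_k^{-1}g_0$ would force this to equal $\mu(r)f_k^{-1}(r)(dr^2+\sin^2 r\,d\theta^2)$ for some $\mu\geq 0$; comparing the two coefficients yields the $f_k$-free identity $\rho'(r)^2\sin^2 r=k^2\sin^2\rho(r)$, i.e.\ $\tan\tfrac{\rho}{2}=C\,|\tan\tfrac r2|^{\pm k}$ (a $\pm$holomorphic map). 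I would then check this identity fails for each of the four explicit pairs $(\rho,k)$ --- e.g.\ for $\rho=r$, $k=\pm\sqrt3$ it reads $\sin^2 r=3\sin^2 r$, and for $\rho=\tfrac12\arccos(\sin^2\tfrac r2)$, $k=2$ it reduces to $\sin^4\tfrac r2=2+2\sin^2\tfrac r2$, which has no root with $\sin^2\tfrac r2\in(0,1)$ --- so none of the maps ${\bf I}\circ\varphi_k$ is weakly conformal.

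There is no deep obstacle here; what needs care is purely the bookkeeping of the singular sets. The composition identities and the conformality computation are valid only where $f_k$ is smooth and positive, so ``biharmonic'' and ``not weakly conformal'' are asserted on $S^2$ minus the finite singular set of $f_k$ --- exactly the sense in which the corollary is phrased --- and the non--weak-conformality identity must be tested at generic points rather than at the already excluded zeros and poles of $f_k$.
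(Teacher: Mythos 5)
Your argument is correct and, for the main assertion, follows essentially the same route as the paper: Corollary \ref{C20} together with Theorems \ref{glob}, \ref{kzt}, \ref{G3} and \ref{kztt} gives that each $\varphi_k:(S^2,f_k^{-1}g_0)\longrightarrow S^2$ is proper biharmonic (away from the singular points of $f_k$), and one then post-composes with the totally geodesic embedding; the paper simply cites from \cite{Ou1} the fact that composing a proper biharmonic map with a totally geodesic map yields a proper biharmonic map, while you re-derive it via $\tau_2(\mathbf{I}\circ\varphi_k)=d\mathbf{I}(\tau_2(\varphi_k))$, with properness coming from injectivity of $d\mathbf{I}$ and $\tau(\varphi_k)\ne 0$ (your checks that $x=(1-k^2)\cot r\not\equiv 0$ for $\rho(r)=r$ and that $x$ is nonvanishing in Theorem \ref{glob} are right). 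Where you genuinely add content is the final sentence of the corollary: the paper's proof never addresses ``none of such maps is weakly conformal,'' whereas you prove it by noting that, since $\mathbf{I}$ is isometric and weak conformality is insensitive to the conformal factor $f_k^{-1}$ on the domain, it reduces to the $f_k$-free identity $\rho'(r)^2\sin^2 r=k^2\sin^2\rho(r)$ (here $k$ is the wrapping number), which you correctly show fails identically in each case --- $\sin^2 r=3\sin^2 r$ for $\rho=r$, $k=\pm\sqrt{3}$, the impossible relation $\sin^4\frac r2=2+2\sin^2\frac r2$ for the map of Theorem \ref{glob}, and the $k^2=\frac14$ case being analogous. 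Your bookkeeping of the singular sets of $f_k$ matches the sense in which the corollary is stated, so the proposal is a complete (indeed slightly more complete than the paper's) proof.
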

\begin{proof}
It follows from Corollary \ref{C20} and Theorems \ref{glob}, \ref{kzt}, \ref{G3} and \ref{kztt} that the map $\varphi_k: (S^2, f_k^{-1}g_0)\longrightarrow S^2, i=1, 2, 3, 4$ is a proper biharmonic map. The corollary follows from this and the fact (See e.g., \cite{Ou1}) that the composition of a proper biharmonic map followed by a totally geodesic map is again a proper biharmonic map.
\end{proof}
It is interesting to compare the examples of proper biharmonic maps provided by Collorary \ref{CLast} with the Sack-Uhlenbeck's well-known theorem stating that any harmonic map from $S^2$ is conformal immersion away from points where the differential of the map vanishes.

\end{document}